\newtheorem{theorem}{Theorem}[section]
\newtheorem{corollary}{Corollary}[theorem]
\newtheorem{lemma}[theorem]{Lemma}
\newtheorem{prop}[theorem]{Proposition}
\theoremstyle{definition}
\newtheorem{definition}{Definition}[section]
\theoremstyle{remark}
\title{\textbf{On the corona product of duplication signed graph and its application}}
\author[1]{Bishal Sonar\thanks{Email: bsonarnits@gmail.com}}
\author[2]{Ravi Srivastava\thanks{Corresponding author Email: ravi@nitsikkim.ac.in}}
\affil[1,2]{Department of Mathematics, National Institute of Technology Sikkim, Barfung Block, Ravangla Sub-division, District: Namchi, Sikkim - 737139, India}
\date{}
\begin{document}
\setlength{\parindent}{0pt}
\setlength{\parskip}{\baselineskip}
\maketitle

\begin{abstract}
    \noindent This paper introduces the concepts of $\mu$-signed graphs and duplication-signed graphs and demonstrates that both types are always structurally balanced. Using the framework of duplication-signed graphs, we define two new corona products: the duplication add vertex corona product and the duplication vertex corona product, and also show that they are switching isomorphic. The structural properties of these products are explored in detail.
    Additionally, we compute the adjacency spectrum for both corona products for any pair of signed graphs, $\Gamma_1$ and $\Gamma_2$, and derive the Laplacian and signless Laplacian spectra for regular $\Gamma_1$ and arbitrary $\Gamma_2$, expressed in terms of the spectra of the constituent signed graphs.
\end{abstract}
\textbf{MSC2020 Classification:} 05C22, 05C50, 05C76\\
\textbf{keywords:} duplication signed graph, $\mu$-signed graph, spectrum, integral signed graph, equienergetic signed graphs.

\section{Introduction}
    Signed graphs are an important generalization of classical graph theory, where the edges of a graph are assigned a positive or negative sign. They have been studied very widely due to their applications in modeling structural balance in social networks, circuit theory, and various optimization problems. In the study of signed graphs, their spectral properties also play an important role as tools for understanding the structure and combinatorial features of these graphs. Let $\Gamma=(G,\sigma)$ be a signed graph with a vertex set $V=\{u_1,u_2,\ldots,u_n\}$ and an edge set $E=\{e_1,e_2,\ldots,e_m\}$, where $G=(V, E)$ is the underlying unsigned graph, $\sigma: E\rightarrow \{+,-\}$ is a mapping that assigns either a positive or a negative sign to the edges of $\Gamma$, known as the signature of $\Gamma$, and $n$ and $m$ are the order and size of $\Gamma$, respectively. The signed degree of vertex $u_i$ is denoted by $sdeg(u_i)$, and it is given by $d_{u_i}^+-d_{u_i}^-$, where $d_{u_i}^+$ and $d_{u_i}^-$ are the number of positive and negative edges incident to $u_i$ respectively. The degree of vertex $u_i$ is given by $d_{u_i}=d_{u_i}^++d_{u_i}^-$. A signed graph $\Gamma$ is called \textit{net-regular} with net degree $k(k\in\mathbb{Z})$ if the signed degree of all the vertices of $\Gamma$ are $k$~\cite{nayak2016net}. In addition, $\Gamma$ is called \textit{co-regular} if the underlying unsigned graph $G$ is r-regular. The pair $(r,k)$ is called the co-regularity pair~\cite{shahul2015co}.
    A marking is a function $\mu:V\rightarrow\{1,-1\}$ which assigns either $+1$ or $-1$ to the vertices of $\Gamma$ and thus a signed graph is a $3$-tuple $\Gamma=(G,\sigma,\mu)$. Throughout this paper, we will discuss \textit{canonical marking}, although multiple marking functions of signed graphs exist~\cite {adhikari2023corona}. For canonical marking $\mu(u_i)=\prod_{e\in E_{u_i}}\sigma(e)$, where $E_{u_i}$ is the set of edges adjacent to $u_i$.
    Let $\Gamma_1=(\mathcal{G}_1,\sigma_1,\mu_1)$ and $\Gamma_2=(\mathcal{G}_2,\sigma_2,\mu_2)$ be two signed graphs. If $V(\mathcal{G}_1)=\{u_1,u_2,\ldots,u_{n_1}\}$ and $V(\mathcal{G}_2)=\{v_1,v_2,\ldots,v_{n_2}\}$ then the marking vector of $\Gamma_1$ and $\Gamma_2$ are column vectors of dimension $n_1\times1$ and $n_2\times1$ given by $\mu(\Gamma_1)=[\mu_1(u_1),\mu_1(u_2),\ldots,\mu_1(u_{n_1})]^T$ and $\mu(\Gamma_2)=[\mu_2(v_1),\mu_2(v_2),\ldots,\mu_2(v_{n_2})]^T$ respectively, where $A^T$ denotes transpose of $A$.
    Also we consider $\phi(\Gamma_j)=diag[\mu(\Gamma_j)]^T$ so that $\phi(\Gamma_j)^2=I_{n_j}$ for $i=1,2.$
    A cycle (C) is positive if the product of the signatures of its edges is positive. Otherwise, it is negative. A graph is balanced if all its cycles are positive~\cite{harary1953notion}.
    The adjacency matrix of $\Gamma$, denoted by $A(\Gamma)=(a^{\sigma}_{ij})_{n\times n}$, is an $n\times n$ matrix, where $a_{ij}^\sigma=\sigma(u_iu_j)a_{ij}$ and $a_{ij}=1$ if $u_i$ is adjacent to $u_j$, and $0$ otherwise. The Laplacian matrix $L(\Gamma)$ and the signless Laplacian matrix $Q(\Gamma)$ are given by $L(\Gamma)=D(\Gamma)-A(\Gamma)$ and $Q(\Gamma)=D(\Gamma)+A(\Gamma)$, where $D(\Gamma)=diag(d_{u_1},d_{u_2},\ldots,d_{u_n})$, a diagonal matrix with entries $d_{u_i}$. Given the same underlying graph $\mathcal{G}$, two signed graphs, $\Gamma_1=(\mathcal{G},\sigma_1)$ and $\Gamma_2=(\mathcal{G},\sigma_2)$, are switching equivalent (referred to as $\Gamma_1 \sim \Gamma_2$) if there exists a switching function $\theta$ such that for each edge $e=(u,v)$ in $\mathcal{G}$, $\theta(u)\sigma_1(e)\theta(v)=\sigma_2(e)$.
    Two signed graphs are called $M$-cospectral if they have same $M$-spectrum, $M\in\{A(\Gamma),L(\Gamma),Q(\Gamma)\}.$
    The sum of absolute eigenvalues of $A(\Gamma)$ is called the energy of $\Gamma$~\cite{bhat2015equienergetic} and is denoted by $E_{\Gamma}=\sum_{i=1}^n|\lambda_i|,$ where $\{\lambda_i:i=1,2,\ldots,n\}$ denotes the the eigenvalues of $A(\Gamma)$.\\

    In 1970, R Frucht and F Harary~\cite{frucht1970corona} first defined the \textit{corona product} for unsigned graphs; later, Cam McLeman and Erin McNicholas~\cite{mcleman2011spectra} introduced the coronal of unsigned graphs to obtain the adjacency spectra for arbitrary graphs. Then Shu and Gui~\cite{cui2012spectrum} generalized it and defined the coronal of the Laplacian and the signless Laplacian matrix of unsigned graphs. R.P. Varghese and D. Susha~\cite{varghese2017spectrum} introduced the duplication add vertex corona and duplication vertex corona spectra for unsigned graphs. We have built upon their research by exploring these spectra for signed graphs and discovered that the two products are switching isomorphic. Additionally, we have analyzed the structural properties of the product using the framework established by Adhikari et al.~\cite{adhikari2023corona}. Lastly, we have discussed potential applications of these results to generating equienergetic and integral signed graphs. This work expands the study of signed graphs by enhancing both structural and spectral dimensions. In doing so, new opportunities open for further research in more specialized graph classes and their applications.

\section{\textbf{Preliminaries}}
    The duplication graph of the unsigned graph was defined by E Sampathkumar~\cite{sampathkumar1973duplicate}; following that, we define the duplication signed graph as follows;
    \begin{definition}[Duplication signed graph] \label{D0}
        Suppose $\Gamma(G,\sigma,\mu)$ be a signed graph with vertex set $V(\Gamma)=\{u_1,u_2,\ldots,u_n\}$ and $U(\Gamma)=\{a_1,a_2,\ldots,a_n\}$ be another set corresponding to $V(\Gamma)$ and having the same marking(i.e., $\mu(a_i)=\mu(u_i);~ i=1,2,\ldots,n$). Draw $a_i$ adjacent to all the vertices in $N(u_i)$, in $\Gamma$ (sign of new edge $e=\mu(a_i)\mu(u),~ u\in N(u_i)$) and delete the edges of $\Gamma$ only. The signed graph thus obtained is called the duplication signed graph of $\Gamma$, and we denote it by $D\Gamma$.
    \end{definition}

    Following the notion of marked strong signed graph-structured defined by E.Sampathkumar et al. ~\cite{article}, we define $\mu$-signed graph as follows;
    \begin{definition}[$\mu$-signed graph] \label{D5}
        Let $\Gamma=(G,\sigma,\mu)$ be a signed graph with marking $\mu$, then the $\mu-$signed graph $\Gamma_{\mu}=(G,\sigma_{\mu},\mu)$ is a signed graph with same underlying graph G and marking $\mu$ but the following signature($\sigma_{\mu}$);
        \begin{align*}
            \sigma_{\mu}(e)=\mu(u)\mu(v), \text{ for all } e(=uv)\in E(G).
        \end{align*}
    \end{definition}

    \begin{theorem}~\cite{harary1953notion} \label{Th1}
        A signed graph $\Gamma=(G,\sigma,\mu)$ is balanced if and only if there exists a marking $\mu$ of its vertices such that for each edge $uv$ in $G$ one has $\sigma(uv)=\mu(u)\mu(v).$
    \end{theorem}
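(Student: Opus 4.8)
The plan is to prove the two implications separately, in each case reducing everything to the defining property of balance: that the product of the edge signatures around every cycle is positive.

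For the direction that assumes such a marking $\mu$ exists, I would take an arbitrary cycle $C=w_1w_2\cdots w_\ell w_1$ of $G$ and compute the product of its edge signs. Substituting $\sigma(w_iw_{i+1})=\mu(w_i)\mu(w_{i+1})$ makes the product telescope,
\[
\prod_{i=1}^{\ell}\sigma(w_iw_{i+1})=\prod_{i=1}^{\ell}\mu(w_i)\mu(w_{i+1})=\prod_{i=1}^{\ell}\mu(w_i)^2=1
\]
(indices read modulo $\ell$, so $w_{\ell+1}=w_1$), since each vertex of $C$ contributes its label exactly twice. Hence every cycle of $\Gamma$ is positive, and $\Gamma$ is balanced.

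For the converse, assume $\Gamma$ is balanced. Since both balance and the sought marking can be verified one connected component at a time, I would assume $G$ is connected, fix a spanning tree $T$ rooted at a vertex $r$, set $\mu(r)=+1$, and for every other vertex $v$ define $\mu(v)=\prod_{e\in P_T(r,v)}\sigma(e)$, where $P_T(r,v)$ is the unique path from $r$ to $v$ in $T$. I would then check $\sigma(uv)=\mu(u)\mu(v)$ for all edges of $G$. Tree edges are immediate: if $v$ is the child of $u$ in $T$, then $\mu(v)=\mu(u)\sigma(uv)$, so $\mu(u)\mu(v)=\mu(u)^2\sigma(uv)=\sigma(uv)$. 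For a non-tree edge $uv$, adjoining it to the tree path $P_T(u,v)$ produces a cycle, and balance forces it to be positive, i.e. $\sigma(uv)\prod_{e\in P_T(u,v)}\sigma(e)=1$.

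The hard part will be identifying $\prod_{e\in P_T(u,v)}\sigma(e)$ with $\mu(u)\mu(v)$, which requires a short argument about the common ancestor. Letting $w$ be the last vertex shared by $P_T(r,u)$ and $P_T(r,v)$, the $r$-to-$w$ segment is common to both paths, so its contribution to $\mu(u)\mu(v)$ occurs squared and cancels, leaving $\mu(u)\mu(v)=\prod_{e\in P_T(w,u)}\sigma(e)\cdot\prod_{e\in P_T(w,v)}\sigma(e)=\prod_{e\in P_T(u,v)}\sigma(e)$, because $P_T(u,v)$ is exactly $P_T(u,w)$ followed by $P_T(w,v)$. Feeding this into the positivity relation gives $\sigma(uv)\,\mu(u)\mu(v)=1$, and since both sides lie in $\{+1,-1\}$ this is precisely $\sigma(uv)=\mu(u)\mu(v)$. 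This would complete the proof; the only bookkeeping that needs care is the handling of this common initial segment, together with the degenerate cases $w=u$ or $w=v$, which cause no difficulty.
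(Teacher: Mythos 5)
Your proof is correct: the telescoping argument gives one direction, and the spanning-tree marking with the fundamental-cycle and common-ancestor cancellation argument gives the converse, including the tree-edge case and the degenerate meeting-point cases. Note that the paper itself offers no proof of this statement --- it is Harary's classical balance theorem, quoted with a citation to \cite{harary1953notion} --- and your argument is essentially the standard spanning-tree proof of that result, so there is nothing in the paper to diverge from.
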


   \begin{lemma}
       $\mu-$signed graph of a signed graph $\Gamma=(G,\sigma,\mu)$ is always balanced.
   \end{lemma}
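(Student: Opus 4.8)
The plan is to apply Theorem~\ref{Th1} directly, using the very marking $\mu$ that comes packaged with the $\mu$-signed graph $\Gamma_{\mu}=(G,\sigma_{\mu},\mu)$. Recall that a signed graph is balanced if and only if there is some marking of its vertices that ``explains'' every edge sign as the product of the signs of its endpoints. So it suffices to exhibit one such marking for $\Gamma_{\mu}$.

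First I would recall the definition of $\Gamma_{\mu}$: its underlying graph is $G$, its marking is $\mu$, and its signature is defined by $\sigma_{\mu}(e)=\mu(u)\mu(v)$ for every edge $e=uv\in E(G)$. Then I would observe that this is exactly the condition appearing in Theorem~\ref{Th1} with the marking taken to be $\mu$ itself: for each edge $uv$ of $G$ we have $\sigma_{\mu}(uv)=\mu(u)\mu(v)$. Hence, by Theorem~\ref{Th1}, $\Gamma_{\mu}$ is balanced.

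Equivalently, if one prefers the cycle-based formulation, I would note that for any cycle $C=u_{i_1}u_{i_2}\cdots u_{i_k}u_{i_1}$ in $G$, the product of the signatures of its edges is
\[
\prod_{j=1}^{k}\sigma_{\mu}(u_{i_j}u_{i_{j+1}})=\prod_{j=1}^{k}\mu(u_{i_j})\mu(u_{i_{j+1}})=\prod_{j=1}^{k}\mu(u_{i_j})^2=1>0,
\]
where indices are taken modulo $k$ and each vertex marking appears exactly twice in the product, so each $\mu(u_{i_j})$ contributes $\mu(u_{i_j})^2=1$. Thus every cycle of $\Gamma_{\mu}$ is positive, and $\Gamma_{\mu}$ is balanced.

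There is essentially no obstacle here; the statement is an immediate consequence of Harary's characterization, and the only ``work'' is recognizing that the defining signature $\sigma_{\mu}$ is manufactured precisely so that $\mu$ witnesses balance. I would present the first argument (invoking Theorem~\ref{Th1}) as the main proof, since it is the shortest, and optionally remark on the cycle computation as an alternative viewpoint.
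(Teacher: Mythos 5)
Your proof is correct and follows the same route as the paper, which simply invokes Definition~\ref{D5} together with Theorem~\ref{Th1}: the signature $\sigma_{\mu}(uv)=\mu(u)\mu(v)$ is exactly the condition of Harary's characterization with $\mu$ as the witnessing marking. Your cycle computation is a fine optional addendum but adds nothing essential beyond what the paper treats as immediate.
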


   \begin{proof}
       Proof is obvious using Definition~\ref{D5} and Theorem~\ref{Th1}.
   \end{proof}
   \begin{lemma}\label{L1}\cite{cvetkovic1980spectra}
        Let $A=\begin{bmatrix}
            A_1&A_2\\A_2&A_1
        \end{bmatrix}$
        be a block symmetric matrix of order $2\times2$. Then the eigenvalues of $A$ are those of $A_1+A_2$ together with $A_1-A_2$.
    \end{lemma}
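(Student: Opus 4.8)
The plan is to produce an explicit orthogonal similarity that block-diagonalises $A$, after which the conclusion is immediate. Write $n$ for the common order of the blocks $A_1$ and $A_2$, and set
\[
P=\frac{1}{\sqrt{2}}\begin{bmatrix} I_n & I_n\\ I_n & -I_n\end{bmatrix}.
\]
First I would record the elementary facts that $P$ is symmetric and that $P^2=I_{2n}$, so $P$ is orthogonal with $P^{-1}=P=P^{T}$; this uses nothing about $A_1$ and $A_2$. Then I would carry out the block multiplication $PAP$: using $A=\begin{bmatrix}A_1&A_2\\A_2&A_1\end{bmatrix}$ and collecting the four $n\times n$ blocks, one obtains
\[
P A P=\begin{bmatrix} A_1+A_2 & 0\\ 0 & A_1-A_2\end{bmatrix}.
\]

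Since $A$ and $PAP$ are similar, they share the same characteristic polynomial, and the characteristic polynomial of a block-diagonal matrix factors over its diagonal blocks, so
\[
\det(\lambda I_{2n}-A)=\det\bigl(\lambda I_n-(A_1+A_2)\bigr)\,\det\bigl(\lambda I_n-(A_1-A_2)\bigr).
\]
Hence the multiset of eigenvalues of $A$ is the union of the eigenvalues of $A_1+A_2$ with those of $A_1-A_2$, which is the assertion. If one prefers to avoid the determinantal identity, the same thing can be seen directly with eigenvectors: whenever $(A_1+A_2)x=\lambda x$, the vector obtained by stacking $x$ on top of $x$ is an eigenvector of $A$ with eigenvalue $\lambda$, and whenever $(A_1-A_2)y=\mu y$, stacking $y$ on top of $-y$ gives an eigenvector of $A$ with eigenvalue $\mu$; running bases of the (generalised) eigenspaces of $A_1+A_2$ and of $A_1-A_2$ through these two stacking maps yields $2n$ linearly independent (generalised) eigenvectors of $A$ with exactly the claimed eigenvalues.

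I do not expect a genuine obstacle: the statement is classical (it is quoted from Cvetkovi\'c--Doob--Sachs) and the similarity computation is two lines. The only point worth flagging in the write-up is that the argument uses \emph{only} the equality of the two off-diagonal blocks of $A$ — no symmetry, normality, or commutativity of $A_1$ and $A_2$ individually is needed — which is exactly what makes the lemma applicable later, where $A_1$ and $A_2$ will be adjacency- or Laplacian-type matrices arising from the corona constructions and cannot be assumed to be simultaneously diagonalisable.
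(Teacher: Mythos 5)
Your proof is correct. Note that the paper offers no proof of this lemma at all --- it is simply quoted from Cvetkovi\'c--Doob--Sachs --- so there is nothing to compare against; your conjugation by $P=\frac{1}{\sqrt{2}}\begin{bmatrix} I_n & I_n\\ I_n & -I_n\end{bmatrix}$ (or, equivalently, the stacked-eigenvector argument) is the standard way to establish it, and the block computation checks out. One small wording point: the argument uses the equality of the two diagonal blocks as well as of the two off-diagonal blocks --- both are of course built into the hypothesis $A=\begin{bmatrix} A_1 & A_2\\ A_2 & A_1\end{bmatrix}$ --- while, as you correctly stress, no symmetry or commutativity of $A_1$ and $A_2$ individually is required.
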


    \begin{lemma} \label{L3}
        The duplication signed graph of any graph is balanced.
    \end{lemma}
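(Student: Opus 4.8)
The plan is to verify Harary's balance criterion (Theorem~\ref{Th1}) directly, by producing a marking of $D\Gamma$ under which every edge sign equals the product of the markings of its two endpoints. Recall from Definition~\ref{D0} that $V(D\Gamma)=V(\Gamma)\cup U(\Gamma)$ with $U(\Gamma)=\{a_1,\dots,a_n\}$, that all edges of $\Gamma$ are deleted, and that the only edges created are those joining a duplicate vertex $a_i$ to a vertex $u_j\in N(u_i)$, each carrying the sign $\mu(a_i)\mu(u_j)$. In particular $D\Gamma$ has no edge with both endpoints in $V(\Gamma)$ and none with both endpoints in $U(\Gamma)$, so every edge of $D\Gamma$ is of this ``cross'' form.

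First I would take as marking of $D\Gamma$ the function $\mu'$ defined by $\mu'(u_i)=\mu(u_i)$ and $\mu'(a_i)=\mu(a_i)=\mu(u_i)$ for each $i$; this is precisely the marking already attached to $D\Gamma$ in Definition~\ref{D0}. Then for an arbitrary edge $e=a_iu_j$ of $D\Gamma$ (so $u_j\in N(u_i)$), the definition of the signature of $D\Gamma$ gives $\sigma_{D\Gamma}(e)=\mu(a_i)\mu(u_j)=\mu'(a_i)\mu'(u_j)$. Hence $\sigma_{D\Gamma}$ agrees with the product of the endpoint markings on every edge, and Theorem~\ref{Th1} immediately yields that $D\Gamma$ is balanced.

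Equivalently, and perhaps more transparently, the computation above exhibits $D\Gamma$ as the $\mu$-signed graph built on the underlying (unsigned) duplication graph of $G$ equipped with the marking $\mu'$, so balancedness also follows at once from the earlier lemma that every $\mu$-signed graph is balanced. There is essentially no computation to carry out; the only point that requires care is the bookkeeping of Definition~\ref{D0}, i.e.\ confirming that every surviving edge of $D\Gamma$ joins some $a_i$ to some $u_j\in N(u_i)$ and has sign exactly $\mu(a_i)\mu(u_j)$, so that the displayed marking genuinely certifies balance. Accordingly I anticipate no real obstacle: the lemma is a direct consequence of Harary's criterion once the edge set and signature of $D\Gamma$ have been correctly read off from the definition.
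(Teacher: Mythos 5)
Your proof is correct, but it takes a genuinely different route from the paper. You verify Harary's criterion (Theorem~\ref{Th1}) directly: since every edge of $D\Gamma$ joins some $a_i$ to some $u_j\in N(u_i)$ and carries sign $\mu(a_i)\mu(u_j)$ by Definition~\ref{D0}, the inherited marking $\mu'(u_i)=\mu'(a_i)=\mu(u_i)$ certifies balance edge by edge; equivalently, you observe that $D\Gamma$ is exactly the $\mu$-signed graph of the underlying duplication graph, so the earlier lemma applies. The paper instead argues spectrally: it writes $L(D\Gamma)$ as a $2\times 2$ block symmetric matrix with blocks $D(\Gamma_\mu)$ and $-A(\Gamma_\mu)$, invokes Lemma~\ref{L1} to split its spectrum into that of $L(\Gamma_\mu)$ and of $Q(\Gamma_\mu)$, and concludes balance from the presence of a zero Laplacian eigenvalue (since $\Gamma_\mu$ is balanced). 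Your argument is more elementary and in some ways tighter: it needs no spectral machinery and no appeal to the eigenvalue characterization of balance, which in general requires care (a zero Laplacian eigenvalue of a signed graph only certifies that \emph{some} connected component is balanced, so the paper's one-line conclusion is slightly loose for disconnected graphs, whereas your marking argument handles every component uniformly). What the paper's approach buys is the explicit block form of $L(D\Gamma)$ and its spectrum in terms of $\Gamma_\mu$, which is reused implicitly in the later spectral computations; your approach buys a shorter, purely structural proof.
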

    \begin{proof}
        Let $\Gamma=(G,\sigma,\mu)$ be a signed graph, and $D\Gamma$ be its duplication signed graph, then the Laplacian matrix of $D\Gamma$ is,
        $$L(D\Gamma)=\begin{bmatrix}
            D(\Gamma_{\mu})& -A(\Gamma_{\mu})\\
            -A(\Gamma_{\mu})& D(\Gamma_{\mu})
        \end{bmatrix}$$
        Using Lemma~\ref{L1} we have the eigenvalues of $L(D\Gamma)$ are those of $D(\Gamma_{\mu})-A(\Gamma_{\mu})=L(\Gamma_{\mu})$ along with the eigenvalues of $D(\Gamma_{\mu})+A(\Gamma_{\mu})$.\\
        So, $0$ is one eigenvalue of $L(D\Gamma)$ as $\Gamma_{\mu}$ is balanced.\\
        Hence, the duplication signed graph $D\Gamma$ is balanced.
    \end{proof}

    \begin{lemma}(Schur complement)\label{L2}~\cite{bapat2010graphs}
        Let $A_1,A_2,A_3,$ and $A_4$ be matrix of order $n_1\times n_1, n_1\times n_2, n_2\times n_1, n_2\times n_2$ respectively with $A_1$ and $A_4$ are invertible. Then 
            \begin{align*}
                \det\begin{bmatrix}
                A_1&A_2\\A_3&A_4
        \end{bmatrix}&=\det(A_1)\det(A_4-A_3A_1^{-1}A_2)\\
        &=\det(A_4)\det(A_1-A_2A_4^{-1}A_3).
        \end{align*}
    \end{lemma}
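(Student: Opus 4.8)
The plan is to prove each of the two determinant identities by producing an explicit block factorization of the partitioned matrix and then invoking multiplicativity of the determinant together with the fact that a block-triangular matrix has determinant equal to the product of the determinants of its diagonal blocks.

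First I would treat the identity involving $A_1^{-1}$. Using that $A_1$ is invertible, one checks by direct block multiplication that
\[
\begin{bmatrix} A_1 & A_2\\ A_3 & A_4\end{bmatrix}
=\begin{bmatrix} I_{n_1} & 0\\ A_3A_1^{-1} & I_{n_2}\end{bmatrix}
\begin{bmatrix} A_1 & 0\\ 0 & A_4-A_3A_1^{-1}A_2\end{bmatrix}
\begin{bmatrix} I_{n_1} & A_1^{-1}A_2\\ 0 & I_{n_2}\end{bmatrix}.
\]
The first and third factors are block-triangular with identity diagonal blocks, hence have determinant $1$; the middle factor is block-diagonal, so its determinant is $\det(A_1)\det(A_4-A_3A_1^{-1}A_2)$. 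Taking determinants of both sides yields the first equality.

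For the second equality I would run the mirror-image argument, now using invertibility of $A_4$: verify that
\[
\begin{bmatrix} A_1 & A_2\\ A_3 & A_4\end{bmatrix}
=\begin{bmatrix} I_{n_1} & A_2A_4^{-1}\\ 0 & I_{n_2}\end{bmatrix}
\begin{bmatrix} A_1-A_2A_4^{-1}A_3 & 0\\ 0 & A_4\end{bmatrix}
\begin{bmatrix} I_{n_1} & 0\\ A_4^{-1}A_3 & I_{n_2}\end{bmatrix},
\]
and take determinants as before to obtain $\det(A_4)\det(A_1-A_2A_4^{-1}A_3)$. There is essentially no obstacle here: the one step needing a moment's care is confirming that the off-diagonal blocks of the triple product collapse to exactly $A_2$ and $A_3$ (equivalently, that the Schur complement is precisely what remains in the other diagonal block), which is immediate once the middle block is written down correctly. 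The two background facts invoked—multiplicativity of $\det$ and the block-triangular determinant formula—may be taken as known, the latter following from cofactor expansion or, if one prefers, from the special cases $A_2=0$ and $A_3=0$ of the identities being proved.
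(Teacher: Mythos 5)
Your proof is correct: both block factorizations check out by direct multiplication, and the determinant of each identity-diagonal triangular factor is indeed $1$, so the two Schur-complement identities follow. The paper itself states this lemma as a known result cited from Bapat's book and gives no proof, and your block LU/UL factorization argument is exactly the standard proof of that cited result; the only caveat is to justify the block-triangular determinant formula by cofactor expansion (as you primarily do) rather than by the special cases of the identity being proved, which would be circular.
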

    
    \begin{definition}\cite{neumaier1992horn}[Kronecker product]
        Let $A=(a_{ij})_{n_1\times m_1}$ and $B=(b_{ij})_{n_2\times m_2}$, then the Kronecker product $A\otimes B$ of matrix $A$ and $B$ is a $n_1n_2\times m_1m_2$ matrix formed by replacing each $a_{ij}$ by $a_{ij}B$.
    \end{definition}
    The Kronecker product has several important properties:
    \begin{enumerate}
        \item It is associative: for any matrices \( A, B, C, \) and \( D \), we have \( (A \otimes B) \otimes C = A \otimes (B \otimes C) \).
        \item The transpose of the Kronecker product can be expressed as: 
        $(A \otimes B)^T = A^T \otimes B^T.$
        \item When multiplying Kronecker products, the following holds:
        $(A \otimes B)(C \otimes D) = AC \otimes BD,$ as long as the matrix products \( AC \) and \( BD \) are defined.
        \item For non-singular matrices \( A \) and \( B \), the inverse of the Kronecker product is:
        $(A \otimes B)^{-1} = A^{-1} \otimes B^{-1}.$
        \item If \( A \) is an \( n \times n \) matrix and \( B \) is an \( m \times m \) matrix, the determinant of the Kronecker product is given by:
        $\text{det}(A \otimes B) = (\text{det} A)^m (\text{det} B)^n.$
    \end{enumerate}

\section{Corona product of duplication signed graph}
    \begin{definition}[\textbf{Duplication add vertex corona product}]
        Let $\Gamma_1$ and $\Gamma_2$ be two vertex-disjoint signed graphs with $n_1$ and $n_2$ vertices, respectively. Let $D\Gamma_1$ be the \textit{duplication} signed graph of $\Gamma_1$ with vertex set $V(\Gamma_1)\cup U(\Gamma_1)$, where $V(\Gamma_1)=\{u_1,u_2,\ldots,u_{n_1}\}$ and $U(\Gamma_1)=\{a_1,a_2,\ldots,a_{n_1}\}$. Duplication add vertex corona, $\Gamma_1 \circledast \Gamma_2$, is the graph obtained from $D\Gamma_1$ and $n_1$ copies of $\Gamma_2$ by making $a_i$ adjacent to every vertices in the $i^{th}$ copy of $\Gamma_2$ for $i=1,2,\ldots,n_1.$
    \end{definition}

    \begin{definition}[\textbf{Duplication vertex corona product}]
        Let $\Gamma_1$ and $\Gamma_2$ be two vertex-disjoint signed graphs with $n_1$ and $n_2$ vertices, respectively. Let $D\Gamma_1$ be the \textit{duplication} signed graph of $\Gamma_1$ with vertex set $V(\Gamma_1)\cup U(\Gamma_1)$, where $V(\Gamma_1)=\{u_1,u_2,\ldots,u_{n_1}\}$ and $U(\Gamma_1)=\{a_1,a_2,\ldots,a_{n_1}\}$. Duplication vertex corona, $\Gamma_1 \circledcirc \Gamma_2$, is the graph obtained from $D\Gamma_1$ and $n_1$ copies of $\Gamma_2$ by making $v_i$ adjacent to every vertices in the $i^{th}$ copy of $\Gamma_2$ for $i=1,2,\ldots,n_1.$
    \end{definition}

    \begin{theorem}\label{TH0}
        The duplication add vertex corona product $\Gamma_1 \circledast \Gamma_2$ and duplication vertex corona product $\Gamma_1 \circledcirc \Gamma_2$ are switching isomorphic.
    \end{theorem}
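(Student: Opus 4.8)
The plan is to exhibit an explicit vertex bijection between $\Gamma_1 \circledast \Gamma_2$ and $\Gamma_1 \circledcirc \Gamma_2$ and show that, after an appropriate switching of one of the two signed graphs, this bijection becomes a sign-preserving isomorphism. The natural candidate map $f$ keeps every copy of $\Gamma_2$ fixed, fixes the duplication vertices $U(\Gamma_1)=\{a_1,\dots,a_{n_1}\}$, and also keeps $V(\Gamma_1)=\{u_1,\dots,u_{n_1}\}$ pointwise; the point is simply that in $\Gamma_1\circledast\Gamma_2$ the $i$th copy of $\Gamma_2$ hangs off $a_i$, whereas in $\Gamma_1\circledcirc\Gamma_2$ it hangs off $u_i$. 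So as unsigned graphs the two coronas are isomorphic precisely because $a_i$ and $u_i$ play symmetric roles inside $D\Gamma_1$ — indeed $D\Gamma_1$ has the block adjacency form $\begin{bmatrix}0 & A(\Gamma_{1\mu})\\ A(\Gamma_{1\mu}) & 0\end{bmatrix}$, which is invariant under swapping the two blocks. First I would record this unsigned isomorphism carefully, writing both adjacency matrices in $(2n_1+n_1n_2)\times(2n_1+n_1n_2)$ block form with the blocks indexed by $V(\Gamma_1)$, $U(\Gamma_1)$, and the $n_1n_2$ copy-vertices.

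Next I would write down the two adjacency matrices explicitly. With $J$ denoting the appropriate all-ones (row) vector of length $n_2$ and $\mathbf 1$ its transpose, one gets
\[
A(\Gamma_1\circledast\Gamma_2)=\begin{bmatrix}
0 & A(\Gamma_{1\mu}) & 0\\
A(\Gamma_{1\mu}) & 0 & I_{n_1}\otimes J\\
0 & I_{n_1}\otimes \mathbf 1 & I_{n_1}\otimes A(\Gamma_2)
\end{bmatrix},\qquad
A(\Gamma_1\circledcirc\Gamma_2)=\begin{bmatrix}
0 & A(\Gamma_{1\mu}) & I_{n_1}\otimes J\\
A(\Gamma_{1\mu}) & 0 & 0\\
I_{n_1}\otimes \mathbf 1 & 0 & I_{n_1}\otimes A(\Gamma_2)
\end{bmatrix},
\]
so that conjugating $A(\Gamma_1\circledast\Gamma_2)$ by the permutation matrix $P$ that swaps the first two blocks and fixes the third turns the $(1,3)$ and $(3,1)$ positions into $I_{n_1}\otimes J$ while leaving the bottom-right block unchanged — i.e. $P^{-1}A(\Gamma_1\circledast\Gamma_2)P=A(\Gamma_1\circledcirc\Gamma_2)$. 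The key point to check is that the off-diagonal signed blocks $A(\Gamma_{1\mu})$ and the pendant blocks $I_{n_1}\otimes J$ are genuinely unchanged by the swap (the pendant edges of the corona are all positive by construction, and the $\Gamma_{1\mu}$ block is symmetric in $V$ and $U$), which is exactly where the duplication structure is doing the work.

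Since $P$ is a permutation matrix, $P^{-1}A(\Gamma_1\circledast\Gamma_2)P=A(\Gamma_1\circledcirc\Gamma_2)$ already shows the two coronas are isomorphic as signed graphs, hence certainly switching isomorphic (a switching isomorphism is an isomorphism composed with a switching, and here the trivial switching suffices). Strictly I should double-check against the definitions exactly how the signs on the corona's pendant edges are assigned — if the construction in fact assigns sign $\mu(a_i)\mu(v)$ (respectively $\mu(u_i)\mu(v)$) to the pendant edge joining $a_i$ (respectively $u_i$) to a vertex $v$ of the $i$th copy, then, since $\mu(a_i)=\mu(u_i)$ by the definition of $D\Gamma_1$, the pendant blocks still match after the swap and no nontrivial switching is needed; if some other convention makes them differ by a sign, then switching at the vertices of the affected copies of $\Gamma_2$ repairs it, and in either case the statement holds. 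The main obstacle is therefore not the computation but pinning down the sign convention on the pendant edges and confirming that the marking equality $\mu(a_i)=\mu(u_i)$ (built into Definition~\ref{D0}) makes the two pendant blocks literally coincide; once that is settled, conjugation by $P$ finishes the proof.
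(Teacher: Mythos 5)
Your argument is essentially the paper's: the operative map (conjugation by the permutation $P$ swapping the $V(\Gamma_1)$ and $U(\Gamma_1)$ blocks while fixing the copies of $\Gamma_2$) is exactly the paper's bijection $f(u_i)=a_i$, $f(a_i)=u_i$, $f(v_k)=v_k$, and both proofs conclude with the trivial switching, the crux in each case being that $\mu(a_i)=\mu(u_i)$ forces the pendant sign blocks to coincide. The only slips are cosmetic: your opening description of $f$ as fixing $V(\Gamma_1)$ and $U(\Gamma_1)$ pointwise contradicts the block-swapping $P$ you actually use, and the pendant blocks should be the signed matrices $\mu(\Gamma_2)^T\otimes\phi(\Gamma_1)$ rather than all-ones blocks (the convention your final paragraph hedges on, with sign $\mu(a_i)\mu(v)$, is the one the paper uses, so your resolution there is the correct one).
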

    \begin{proof}
        Let $f:V(\Gamma_1 \circledast \Gamma_2)\rightarrow V(\Gamma_1 \circledcirc \Gamma_2)$ defined by,
        \begin{equation*}
        \begin{split}
            &f(u_i)=a_i;~~for~~i=1,2,\ldots,n_1\\
            &f(a_i)=u_i;~~for~~i=1,2,\ldots,n_1\\
            &f(v_k)=v_k;~~for~~k=1,2,\ldots,n_2.
        \end{split}
        \end{equation*}
        From Definition~\ref{D0}, for each $u_i$, there exists a corresponding $a_i$ and vice versa, so the above function is well-defined and bijective. Also, for edges $(u_i,a_j)$, $(a_i,v_k)$, and $(u_i,v_k)$ in $\Gamma_1 \circledast \Gamma_2$ there exists edges $(a_j,u_i)$, $(u_i,v_k)$, and $(a_i,v_k)$ in $\Gamma_1 \circledcirc \Gamma_2$, respectively. Hence, the underlying graphs are isomorphic.\\
        Next, for each edge $e=(uv)$ in $\Gamma_1 \circledast \Gamma_2$, and marking $\theta:V(\Gamma_1 \circledast \Gamma_2)\rightarrow \{+1\}$,
        $$\theta(u)\sigma_{\Gamma_1 \circledast \Gamma_2}(e)\theta(v)=
        \begin{cases}
            \sigma_{\Gamma_1 \circledcirc \Gamma_2}((u_i,a_j));&\text{ if }e=(u_i,a_j)\\
            \sigma_{\Gamma_1 \circledcirc \Gamma_2}((v_k,u_i));&\text{ if }e=(v_k,a_i)\\
            \sigma_{\Gamma_1 \circledcirc \Gamma_2}((v_k,v_l));&\text{ if }e=(v_k,v_l).
        \end{cases}$$
        Hence, the two signed graph products are switching isomorphic.
    \end{proof}
    Following Theorem \ref{TH0}, all the results of the subsequent section are true for both products.
    
     \begin{figure}[ht]
        \centering
        \includegraphics[scale=0.6]{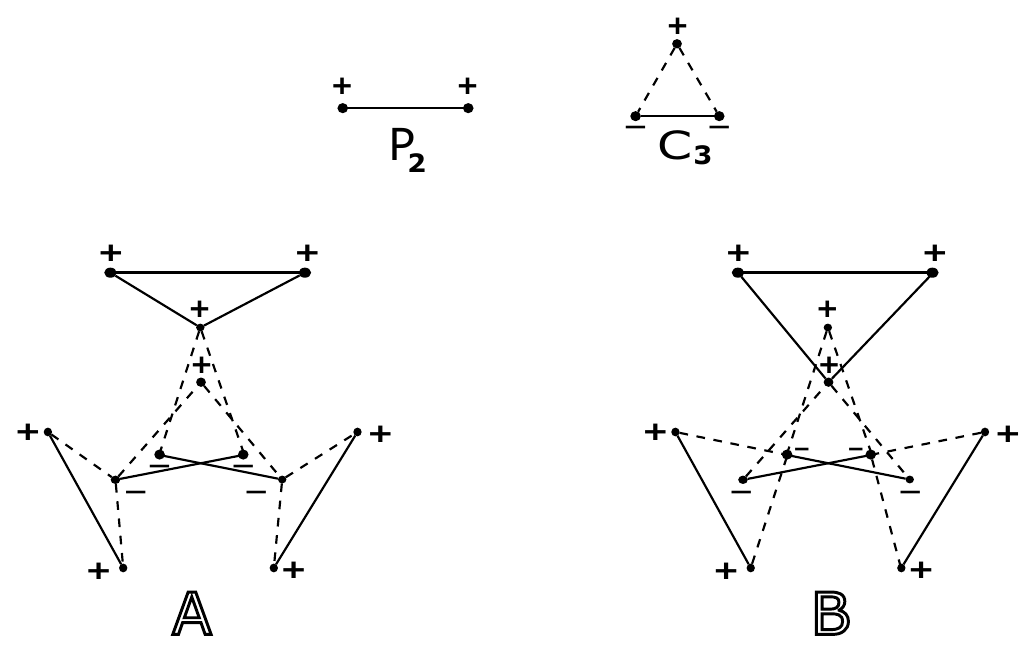}
        \caption{$A=P_2\circledast C_3$ and $B=P_2\circledcirc C_3$. The normal line implies a positive edge, and the broken line implies a negative edge.}
        \label{f1}
    \end{figure}

    \subsection{Structural properties}
        The structural property of the corona of signed graphs has already 
        being established by Adhikari at el.~\cite{adhikari2023corona}, following that we present statistical information concerning the counts of vertices, edges, and traits in $\Gamma_1\circledast\Gamma_2$ for signed graphs $\Gamma_1=(G_1,\sigma_1,\mu_1)$ and $\Gamma_2=(G_2,\sigma_2,\mu_2)$. Clearly, the number of vertices in  $\Gamma_1\circledast\Gamma_2$ is $|V|=|V_1|(2+|V_2|)$.
    \begin{align*}
		N_1^+	&= \text{count of vertices marked positive in $D\Gamma_1$}.\\
            N_2^+	&= \text{count of vertices marked positive in $\Gamma_2$}.\\
            N_1^-	&= \text{count of vertices marked negative in $D\Gamma_1$}.\\
		N_2^-	&= \text{count of vertices marked negative in $\Gamma_2$}.\\
            |DE|    &= \text{count of edges in $D\Gamma_1$}.\\
            |DE^+|  &= \text{count of positive edges in $D\Gamma_1$}.\\
            |DE^-|  &= \text{count of negative edges in $D\Gamma_1$}.\\
            |E_2^+| &= \text{count of the positive edges in } \Gamma_2.\\
            |E_2^-| &= \text{count of the negative edges in } \Gamma_2.
	\end{align*}
 
    Then Table\ref{T1}, and Table \ref{T2} describes the statistics of edges and triads in $\Gamma_1\circledast\Gamma_2$.\\
    \begin{table}[ht]
        \centering
        \begin{tabular}{|c|c|c|c|}
        \hline
        Edges & $D\Gamma_1$  & $\Gamma_2$  & $\Gamma_1\circledast\Gamma_2$ \\ \hline\hline
        Count of edges & $|DE|$ & $|E_2|$ & $|DE|+|U(\Gamma_1)||E_2|+|V_1|V_2|$ \\ \hline
        Count of positive edges & $|DE^+|$ & $|E_2^+|$ &$|DE^+|+|U(\Gamma_1)||E_2^+|+N_1^+N_2^+ +N^-_1N_2^-$\\ \hline
        Count of negative edges & $|DE^-|$ & $|E_2^-|$ &$|DE^-|+|U(\Gamma_1)||E_2^-|+N_1^+N_2^- +N^-_1N_2^+$\\ \hline
        \end{tabular}
        \caption{Count of edges in $\Gamma_1\circledast\Gamma_2.$}
        \label{T1}
    \end{table}

     \begin{table}[ht]
        \centering
        \begin{tabular}{|c|c|c|c|} \hline 
            Triads &$D\Gamma_1$ &$\Gamma_2$ &$\Gamma_1\circledast\Gamma_2$\\ \hline\hline
            Count of $T_0$ &$|T_0(D\Gamma_1)|$ &$|T_0(\Gamma_2)|$ &$|T_0(D\Gamma_1)|+|U(\Gamma_1)||T_0(\Gamma_2)|+N_u^+|E_2^+|^{\overset{+}{+}}+N_u^-|E_2^+|^{\overset{-}{-}}$  \\ \hline
            Count of $T_1$ &$|T_1(D\Gamma_1)|$ &$|T_1(\Gamma_2)|$ &$|T_1(D\Gamma_1)|+|U(\Gamma_1)||T_1(\Gamma_2)|+N_u^+(|E_2^+|^{\overset{+}{-}}+|E_2^-|^{\overset{+}{+}})$\\ &&& $+N_u^-(|E_2^+|^{\overset{+}{-}}+|E_2^-|^{\overset{-}{-}})$\\ \hline
            Count of $T_2$ &$|T_2(D\Gamma_1)|$ &$|T_2(\Gamma_2)|$ &$|T_2(D\Gamma_1)|+|U(\Gamma_1)||T_2(\Gamma_2)|+N_u^+(|E_2^+|^{\overset{-}{-}}+|E_2^-|^{\overset{+}{-}})$\\&&&$+N_u^-(|E_2^+|^{\overset{+}{+}}+|E_2^-|^{\overset{+}{-}})$\\ \hline
            Count of $T_3$ &$|T_3(D\Gamma_1)|$ &$|T_3(\Gamma_2)|$  & $|T_3(D\Gamma_1)|+|U(\Gamma_1)||T_3(\Gamma_2)|+N_u^+|E_2^-|^{\overset{-}{-}}+N_u^-|E_2^-|^{\overset{+}{+}}$\\ \hline
            \end{tabular}
        \caption{Count of Triads in $\Gamma_1\circledast\Gamma_2.$}
        \label{T2}
    \end{table}
    
    Let $p=+,-$ and denote,\\
    $|E^p_2|^{\overset{+}{+}}=$ count of edges with sign $p$ which connects a pair of positive vertices in $\Gamma_2$.\\
    $|E^p_2|^{\overset{+}{-}}=$ count of edges of sign $p$ which connects a pair opposite sign vertices in $\Gamma_2$.\\
    $|E^p_2|^{\overset{-}{-}}=$ count of edges with sign $p$ which connects a pair of negatively marked vertices in $\Gamma_2$.\\
    $T_i=$ triad with $i$ negative edges.\\
    Also let  $N_u^+\in N_1^+\cap U(\Gamma_1)$, and $N_u^-\in N_1^-\cap U(\Gamma_1)$.

    The proofs of the formulas presented in Table \ref{T1}and Table \ref{T2} directly follow from the definition of duplication add vertex corona product, and can easily be verified. Total triads of $\Gamma_1\circledast\Gamma_2$ is given by \[|T(\Gamma_1\circledast\Gamma_2)|=|T(D\Gamma_1)|+|U(\Gamma_1)|\big(|T(\Gamma_2)|+|E_2|\big)\]

    From Lemma~\ref{L3}, it is clear that $\Gamma_1\circledast\Gamma_2$ is unbalanced if $\Gamma_2$ is unbalanced. However, if $\Gamma_2$ is balanced, then it is not certain that $\Gamma_1\circledast\Gamma_2$ is balanced. We can see the same in the Figure~\ref{F2}.\\
    \begin{figure} [ht]
        \centering
        \includegraphics[scale=0.7]{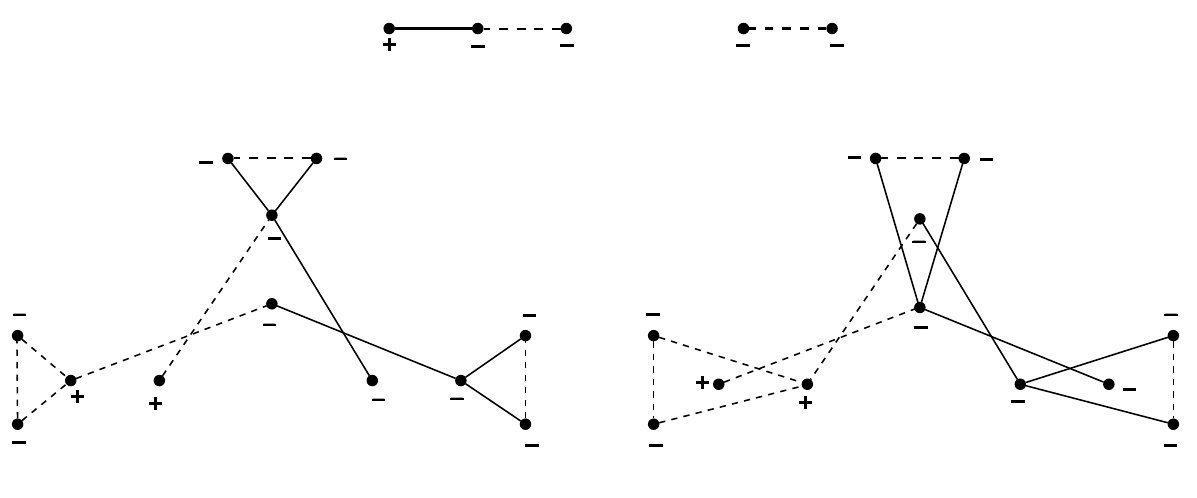}
        \caption{$P_2$ is balanced but both $P_3\circledast P_2$ and $P_3\circledcirc P_2$ are unbalanced.}
        \label{F2}
    \end{figure}

    The upcoming theorem will establish the criteria under which the signed graph $\Gamma_1\circledast\Gamma_2$ is considered unbalanced.
    \begin{theorem}
        Let $\Gamma_1=(G_1,\sigma_1,\mu_1)$ and $\Gamma_2=(G_2,\sigma_2,\mu_2)$ be two balanced signed graphs. Then $\Gamma_1\circledast\Gamma_2$ is unbalanced if and only if $\Gamma_2$ contains the following types of edges.
        \begin{enumerate}
            \item positive edge connecting opposite marked vertices.
            \item negative edge connecting a pair of positive vertices.
            \item negative edge connecting a pair of negative vertices.
        \end{enumerate}
    \end{theorem}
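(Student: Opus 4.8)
The plan is to use the structural description from Definition~\ref{D0} together with Theorem~\ref{Th1} (Harary's balance criterion). Since $\Gamma_1\circledast\Gamma_2$ is switching isomorphic to $\Gamma_1\circledcirc\Gamma_2$ (Theorem~\ref{TH0}), and switching preserves balance, it suffices to analyze one of the two products; I would work with $\Gamma_1\circledast\Gamma_2$. Recall that in $D\Gamma_1$ the edges all have the form $a_i w$ with sign $\mu_1(a_i)\mu_1(w)=\mu_1(u_i)\mu_1(w)$, i.e. $D\Gamma_1$ is precisely (a copy of) the $\mu$-signed graph structure on two vertex classes and is balanced by Lemma~\ref{L3}; moreover, by construction $D\Gamma_1$ has a marking, namely its canonical marking $\mu_1$ extended identically to $U(\Gamma_1)$, realizing every edge sign as a product of endpoint markings. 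The pendant-type edges $a_i v_k^{(i)}$ joining $a_i$ to the $i$-th copy of $\Gamma_2$ carry, by the corona construction, the sign $\mu(a_i)\mu(v_k)=\mu_1(u_i)\mu_2(v_k)$, which is again a product of endpoint markings. So the \emph{only} edges whose sign might fail to equal the product of endpoint markings are the edges inside the copies of $\Gamma_2$.

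First I would assemble the candidate marking $\nu$ on $V(\Gamma_1\circledast\Gamma_2)$: set $\nu(u_i)=\mu_1(u_i)$, $\nu(a_i)=\mu_1(u_i)$, and $\nu(v_k^{(i)})=\mu_2(v_k)$ for every copy $i$. Then I would check Harary's condition edge by edge over the three edge-types listed above: for the $D\Gamma_1$-edges and for the $a_i$-to-copy edges the condition $\sigma(e)=\nu(\text{head})\nu(\text{tail})$ holds automatically by the two observations in the previous paragraph. For an edge $e=v_kv_l$ inside a copy of $\Gamma_2$ we have $\nu(\text{head})\nu(\text{tail})=\mu_2(v_k)\mu_2(v_l)$, so the condition holds at $e$ precisely when $\sigma_2(v_kv_l)=\mu_2(v_k)\mu_2(v_l)$. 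This gives the clean dichotomy: $\Gamma_1\circledast\Gamma_2$ is balanced under the marking $\nu$ iff every edge of $\Gamma_2$ satisfies $\sigma_2(v_kv_l)=\mu_2(v_k)\mu_2(v_l)$, i.e. iff $\Gamma_2$ contains \emph{none} of the three offending edge-types. Since the three listed types — positive edge on oppositely marked endpoints, negative edge on two positively marked endpoints, negative edge on two negatively marked endpoints — are exactly the edges violating $\sigma_2(v_kv_l)=\mu_2(v_k)\mu_2(v_l)$, absence of all three is equivalent to balance via this $\nu$.

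For the harder direction I must argue that if $\Gamma_2$ contains at least one such edge then $\Gamma_1\circledast\Gamma_2$ is genuinely unbalanced, not merely unbalanced with respect to this particular $\nu$. The key point here is that $\Gamma_2$ is assumed balanced, so $\mu_2$ is (up to sign on each connected component) forced to be its canonical marking; hence the sign of each $\Gamma_2$-edge, as a product of \emph{any} valid marking of $\Gamma_2$, is invariant, and an offending edge stays offending. To convert a single bad edge $v_kv_l$ into a negative cycle in $\Gamma_1\circledast\Gamma_2$, I would exhibit an explicit short cycle: take the triangle $a_i$–$v_k^{(i)}$–$v_l^{(i)}$–$a_i$ (which exists because $a_i$ is joined to \emph{every} vertex of the $i$-th copy). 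Its sign is $\sigma(a_iv_k)\,\sigma(v_kv_l)\,\sigma(v_la_i)=\big(\mu_1(u_i)\mu_2(v_k)\big)\,\sigma_2(v_kv_l)\,\big(\mu_2(v_l)\mu_1(u_i)\big)=\sigma_2(v_kv_l)\,\mu_2(v_k)\mu_2(v_l)$, which is negative exactly when $v_kv_l$ is one of the three listed edge-types. A negative triangle forces unbalance, completing the proof. The main obstacle I anticipate is the bookkeeping in the converse: making sure the triangle's edge signs are correctly read off from the corona construction (in particular that the $a_i$-to-copy edge signs are indeed $\mu_1(u_i)\mu_2(v_k)$ as dictated by the definition) and confirming that the switching isomorphism of Theorem~\ref{TH0} lets us transfer everything to $\Gamma_1\circledcirc\Gamma_2$ without re-deriving it; both are routine but must be stated carefully.
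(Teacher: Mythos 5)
Your proposal is correct and takes essentially the same route as the paper: the heart of both arguments is that a bad edge $v_kv_l$ in a copy of $\Gamma_2$ forms with $a_i$ a triangle of sign $\sigma_2(v_kv_l)\mu_2(v_k)\mu_2(v_l)$, i.e. a negative triad ($T_1$ or $T_3$), forcing unbalance. You additionally spell out, via Harary's criterion and the explicit marking $\nu$, the converse that the paper leaves implicit (your aside that balancedness of $\Gamma_2$ ``forces'' $\mu_2$ to be the canonical marking is not needed --- and not quite accurate, since the canonical marking of a balanced graph need not satisfy Harary's condition --- but it is not load-bearing, as the negative triangle settles that direction on its own).
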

    \begin{proof}
        The proof is the consequence of the observation that any positive or negative marked vertices of $\Gamma_1$ form a triad $T_1$ in $\Gamma_1\circledast\Gamma_2$ when there are edges of type $(i)$ and/or $(ii)$ in $\Gamma_2$. Otherwise, it forms a triad $T_3$ when $\Gamma_2$ includes an edge of type $(iii).$
    \end{proof}
    
    \subsection{Spectral properties}
        Cam McLeman and Erin McNicholas~\cite{mcleman2011spectra} first introduced the coronal of graphs. Then Shu and Gui~\cite{cui2012spectrum} generalized it and defined coronal for Laplacian and signless Laplacian matrix of unsigned graphs. Recently, Amrik at. el.~\cite{Structuralbalanceandsignedgraphs_2025} defined the coronal of the signed graph as follows;
    \begin{definition}\cite{Structuralbalanceandsignedgraphs_2025}
        Given a signed graph $\Gamma=(G,\sigma,\mu)$ of order $n$ and $M$ be a graph matrix of $\Gamma$. Now, viewed as a matrix over the rational function field $\mathbb{C}(\lambda)$, the characteristic matrix $\lambda I_n-M$ has a non-zero determinant, which is invertible. The signed $M$-coronal $\chi_M(\lambda) \in \mathbb{C}(\lambda)$ of $\Gamma$ is defined as, 
        \begin{equation}\label{eqn1}
        \chi_M(\lambda)=\mu(\Gamma)^T(\lambda I_n-M)^{-1}\mu(\Gamma)
        \end{equation}
    \end{definition}
    If we replace $M$ in (\ref{eqn1}) by $A(\Gamma)$, $L(\Gamma)$, or $Q(\Gamma)$ then we will obtain signed $A(\Gamma)$-coronal, signed $L(\Gamma)$-coronal, or signed $Q(\Gamma)$-coronal of $\Gamma$ respectively.\\
    
    Using the above notions for two signed graphs $\Gamma_1$ and $\Gamma_2$ with $n_1$ and $n_2$ vertices, we have the following;\vspace{1pt}
   \begin{equation}\label{Eqn1}
           \big(\mu(\Gamma_2^T)\otimes\phi(\Gamma_1)\big)\big((xI_{n_2}-M(\Gamma_2))^{-1}\otimes I_{n_1})\big)\big(\mu(\Gamma_2)\otimes \phi(\Gamma_1)\big)=\chi_{M(\Gamma_2)}(x)I_{n_1} \vspace{2pt}
    \end{equation}
    \begin{equation}\label{Eqn2}
        \big(\mu(\Gamma_2^T)\otimes\phi(\Gamma_1)\big)\big(((x-1)I_{n_2}-M(\Gamma_2))^{-1}\otimes I_{n_1})\big)\big(\mu(\Gamma_2)\otimes \phi(\Gamma_1)\big)=\chi_{M(\Gamma_2)}(x-1)I_{n_1}.
    \end{equation}

    Clearly, for a signed graph $\Gamma$ with order $n$, \begin{equation}
        \begin{split}
            \chi_{M(\Gamma)}(\lambda)&=\frac{\mu(\Gamma)^TAdj(\lambda I_n-M(\lambda))\mu(\Gamma)}{\det(\lambda I_n-M(\Gamma))}\\
            &=\frac{p(M(\Gamma),\lambda)}{f(M(\Gamma),\lambda)},
        \end{split}
    \end{equation}

    where $p(A(\Gamma),\lambda)$ is a polynomial of degree $n-1$ and $f(M(\Gamma),\lambda)$ is the characteristics polynomial of the matrix $M$ of the signed graph $\Gamma$. If the greatest common divisor of these polynomials is a constant, then the above polynomial ratio can be further simplified as follows: \begin{equation}\label{Equ5}
        =\frac{P_{d-1}(\lambda)}{F_d(\lambda)},
    \end{equation}
    where $P_{d-1}(\lambda)$ and $F_d(\lambda)$ are polynomials of degree $d-1$ and $d$ respectively and $$gcd\big(p(M(\Gamma),\lambda),f(M(\Gamma),\lambda)\big)=R_{n-d}(\lambda)$$
    of degree $n-d$.
    
    Let $\Gamma_1=(G_1,\sigma_1,\mu_1)$ and $\Gamma_2=(G_2,\sigma_2,\mu_2)$ be arbitrary signed graphs on $n_1$ and $n_2$ vertices, respectively. Following~\cite{gopalapillai2011spectrum}, we first label the vertices of $\Gamma_1\circledast\Gamma_2$ as follows. Let $V(\Gamma_1)=\{u_1,u_2,\ldots,u_{n_1}\},$ $U(\Gamma_1)=\{a_1,a_2,\ldots,a_{n_1}\},$ and $V(\Gamma_2)=\{v_1,v_2,\ldots,v_{n_2}\}$. For $i=1,2,\ldots,n_1,$ let $v_1 ^i,v_2 ^i,\ldots,v_{n_2} ^i$ denotes the vertices of the $i^{th}$ copy of $\Gamma_2$, with the understanding that $v_j^i$ is the copy of $a_j$. Denote
    \begin{equation} \label{eqn2}
                W_j=\big\{v_j^1,v_j^2,\ldots,v_j^{n_1}\big\}, ~~~j=1,2,\ldots,n_2
    \end{equation}
    Then $V(D\Gamma_1)\cup W_1\cup W_2\cup\ldots\cup W_{n_2}$ is a partition of $V(\Gamma_1\circledast\Gamma_2)$ .\\
    It is clear that the degrees of the vertices of $\Gamma_1\circledast\Gamma_2$ are:\\    
        $d_{\Gamma_1\circledast\Gamma_2}(a_i)=n_2+d_{\Gamma_1}(a_i), ~~i=1,2,\ldots,n_1 \\$
        $d_{\Gamma_1\circledast\Gamma_2}(u_i)=d_{\Gamma_1}(u_i), ~~i=1,2,\ldots,n_1\\$
        $d_{\Gamma_1\circledast\Gamma_2}(v^i_j)=d_{\Gamma_2}(v_j)+1, ~~i=1,2,\ldots,n_1, ~~j=1,2,\ldots,n_2.$\\

    \subsubsection{A-spectra of signed graph}
        \begin{theorem}~\label{Theorem1}
            Let $\Gamma_i$ be two signed graphs with vertices $n_i$ and eigenvalues $\lambda_{i1}\geq\lambda_{i2}\geq\ldots\geq\lambda_{in_i}$, for $i=1,2$. Let $\Gamma_{1\mu}$ be the $\mu$-signed graph of $\Gamma_1$, and $\lambda'_{11},\lambda'_{12},\ldots,\lambda'_{1n_1}$ be the eigenvalue of $\Gamma_{1\mu}.$ Then the adjacency characteristics polynomial of duplication add vertex corona $\Gamma_1\circledast\Gamma_2$ is
            $$f(A(\Gamma_1\circledast\Gamma_2),x)=\prod_{i=1}^{n_2}(x-\lambda_{2i})^{n_1} \prod_{i=1}^{n_1}(x^2-x\chi_{A(\Gamma_2)}(x)-{\lambda'}_{1i}^2).$$
        \end{theorem}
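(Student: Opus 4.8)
The plan is to compute the characteristic polynomial of $A(\Gamma_1\circledast\Gamma_2)$ by writing it in block form according to the partition $V(D\Gamma_1)\cup W_1\cup\cdots\cup W_{n_2}$ and then applying the Schur complement (Lemma~\ref{L2}) together with the coronal identity (equation~(\ref{Eqn1})). First I would set up the adjacency matrix: ordering the vertices as $V(\Gamma_1)$, then $U(\Gamma_1)$, then the $n_1$ copies of $\Gamma_2$ (grouped by copy), the matrix $A(\Gamma_1\circledast\Gamma_2)$ has a $3\times 3$ block structure. The $V(\Gamma_1)$–$V(\Gamma_1)$ and $U(\Gamma_1)$–$U(\Gamma_1)$ diagonal blocks are zero (duplication deletes the edges of $\Gamma_1$), the $V(\Gamma_1)$–$U(\Gamma_1)$ block is $A(\Gamma_{1\mu})$ — because by Definition~\ref{D0} the new edge $u a_i$ carries sign $\mu(a_i)\mu(u)=\mu(u_i)\mu(u)$, which is exactly the $\mu$-signature; the $U(\Gamma_1)$–copies block is a Kronecker-type block whose $(i,j)$ entry records that $a_i$ is joined to every vertex of the $i$th copy with the appropriate signs (this is where $\mu(\Gamma_2)\otimes\phi(\Gamma_1)$ enters), and the copies–copies block is $I_{n_1}\otimes A(\Gamma_2)$ (or $A(\Gamma_2)\otimes I_{n_1}$ depending on the grouping convention). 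The $V(\Gamma_1)$–copies block is zero since only the $a_i$'s are joined to copies in $\circledast$.

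Next I would form $xI-A(\Gamma_1\circledast\Gamma_2)$ and eliminate the copies-block first, since $xI_{n_1n_2}-I_{n_1}\otimes A(\Gamma_2)$ is invertible over $\mathbb{C}(x)$ with inverse $I_{n_1}\otimes(xI_{n_2}-A(\Gamma_2))^{-1}$. Taking the Schur complement with respect to that block, the determinant becomes $\det\big(xI_{n_1n_2}-I_{n_1}\otimes A(\Gamma_2)\big)=\prod_{i=1}^{n_2}(x-\lambda_{2i})^{n_1}$ times the determinant of the reduced $2n_1\times 2n_1$ matrix on $V(D\Gamma_1)$. The correction term hitting the $U(\Gamma_1)$ diagonal block is precisely $\big(\mu(\Gamma_2)^T\otimes\phi(\Gamma_1)\big)\big((xI_{n_2}-A(\Gamma_2))^{-1}\otimes I_{n_1}\big)\big(\mu(\Gamma_2)\otimes\phi(\Gamma_1)\big)=\chi_{A(\Gamma_2)}(x)I_{n_1}$ by (\ref{Eqn1}) (using $\phi(\Gamma_1)^2=I_{n_1}$). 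So the reduced matrix is $\begin{bmatrix} xI_{n_1} & -A(\Gamma_{1\mu})\\ -A(\Gamma_{1\mu}) & (x-\chi_{A(\Gamma_2)}(x))I_{n_1}\end{bmatrix}$. A second Schur complement (with respect to the top-left block $xI_{n_1}$, invertible over $\mathbb{C}(x)$) gives $\det\big(x^{n_1}\big)\det\big((x-\chi_{A(\Gamma_2)}(x))I_{n_1}-\tfrac1x A(\Gamma_{1\mu})^2\big)=\det\big(x(x-\chi_{A(\Gamma_2)}(x))I_{n_1}-A(\Gamma_{1\mu})^2\big)$. Diagonalizing $A(\Gamma_{1\mu})$ with eigenvalues $\lambda'_{1i}$, this is $\prod_{i=1}^{n_1}\big(x^2-x\chi_{A(\Gamma_2)}(x)-{\lambda'_{1i}}^2\big)$, which combined with the first factor yields the claimed formula.

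The step I expect to be the main obstacle is verifying the Kronecker structure of the $U(\Gamma_1)$–copies block carefully enough that the coronal identity (\ref{Eqn1}) applies verbatim — in particular getting the signs right. Each $a_i$ is joined to all of copy $i$; the sign of the edge from $a_i$ to $v_j^i$ must be worked out from the corona construction and the canonical markings, and one needs it to factor as (marking of $v_j$ in $\Gamma_2$) times (marking of $a_i$ in $D\Gamma_1$), so that the whole block equals $\big(\mu(\Gamma_2)\otimes\phi(\Gamma_1)\big)$ suitably placed. Once that factorization is confirmed, everything else is bookkeeping with Schur complements and the Kronecker identities listed in the preliminaries; a minor secondary point is confirming that the degree-of-$\chi$ cancellation (equation~(\ref{Equ5})) does not interfere — but since we work over the rational function field $\mathbb{C}(x)$ throughout and only clear denominators at the end, this is automatic. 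I would also double-check the edge case where some $x-\lambda_{2i}=0$ or $x=0$ is handled by the standard "polynomial identity valid generically extends everywhere" argument.
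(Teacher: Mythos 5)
Your proposal is correct and follows essentially the same route as the paper: the same $3\times3$ block form of $A(\Gamma_1\circledast\Gamma_2)$ under the partition $V(D\Gamma_1)\cup W_1\cup\cdots\cup W_{n_2}$, a Schur complement eliminating the copies block whose correction term is identified with $\chi_{A(\Gamma_2)}(x)I_{n_1}$ via equation~(\ref{Eqn1}), a second Schur complement on the $xI_{n_1}$ block, and the Kronecker determinant giving the factor $\prod_{i=1}^{n_2}(x-\lambda_{2i})^{n_1}$. The only deviations are cosmetic (the $I_{n_1}\otimes A(\Gamma_2)$ versus $A(\Gamma_2)\otimes I_{n_1}$ ordering, which you flag yourself, and your extra care about the sign factorization of the $U(\Gamma_1)$--copies block, which the paper simply asserts).
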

        \begin{proof}
            With respect to the partition mentioned in (\ref{eqn2}), the adjacency matrix of $\Gamma_1\circledast\Gamma_2$ can be written as, 
        \[A(\Gamma_1\circledast\Gamma_2)=\begin{bmatrix}
            0_{n_1}& A(\Gamma_{1\mu})& 0_{n_1\times n_1n_2}\\ 
            A(\Gamma_{1\mu})& 0_{n_1}& \mu(\Gamma_2)^T\otimes\phi(\Gamma_1)\\
            0_{n_1n_2\times n_1}& \mu(\Gamma_2)\otimes\phi(\Gamma_1)& A(\Gamma_2)\otimes I_{n_1}
        \end{bmatrix}\]
        Then using Lemma \ref{L2} the adjacency characteristics polynomial of $\Gamma_1\circledast\Gamma_2$ is given by,
        \begin{equation*}
        \begin{split}
            f(A(\Gamma_1\circledast\Gamma_2);x)&= \det\big(xI_{n_1(2+n_2)}-A(\Gamma_1\circledast\Gamma_2)\big)\\
            &=\begin{vmatrix}
                xI_{n_1}& -A(\Gamma_{1\mu})& 0_{n_1\times n_1n_2}\\
                -A(\Gamma_{1\mu})& xI_{n_1}& -\mu(\Gamma_2)^T\otimes\phi(\Gamma_1)\\
                0_{n_1n_2\times n_1}& -\mu(\Gamma_2)\otimes\phi(\Gamma_1)& (xI_{n_2}-A(\Gamma_2))\otimes I_{n_1})
            \end{vmatrix}\\
            &=\det\{(xI_{n_2}-A(\Gamma_2))\otimes I_{n_1}\}\det(S).
        \end{split}
        \end{equation*}
        where,
        \begin{equation*}
            \begin{split}
                S&=\begin{bmatrix}
                xI_{n_1}& -A(\Gamma_{1\mu})\\
                -A(\Gamma_{1\mu})& xI_{n_1}
            \end{bmatrix} \begin{bmatrix}
                0\\
                -\mu(\Gamma_2)^T\otimes\phi(\Gamma_1)
            \end{bmatrix} - \{(xI_{n_2}-A(\Gamma_2))^{-1}\otimes I_{n_1}\}\\
            &~~~~\begin{bmatrix}
                0& \mu(\Gamma_2)\otimes\phi(\Gamma_1)
            \end{bmatrix}
            \end{split}
        \end{equation*}
        Using Equation~(\ref{Eqn1}) in the above equation we get: -
        \begin{equation*}
            \begin{split}
            S&=\begin{bmatrix}
                xI_{n_1}& -A(\Gamma_{1\mu})\\
                -A(\Gamma_{1\mu})& xI_{n_1}
            \end{bmatrix}-\begin{bmatrix}
                0& 0\\
                0& \chi_{A(\Gamma_2)}(x)I_{n_1}
            \end{bmatrix}\\
            &=\begin{bmatrix}
                 xI_{n_1}& -A(\Gamma_{1\mu})\\
                -A(\Gamma_{1\mu})& (x-\chi_{A(\Gamma_2)}(x))I_{n_1}
            \end{bmatrix}
            \end{split}
        \end{equation*}
        Therefore,
        \begin{equation*}
            \begin{split}
                 \det S&=\det(xI_{n_1})\det\{(x-\chi_{A(\Gamma_2)}(x))I_{n_1}-A(\Gamma_{1\mu})(xI_{n_1})^{-1}A(\Gamma_{1\mu})\}\\
                &=x^{n_1}\det \{(x-\chi_{A(\Gamma_2)}(x))I_{n_1}-\frac{A(\Gamma_{1\mu})^2}{x}\}\\
                &=\prod_{i=1}^{n_1}\{x^2-x\chi_{A(\Gamma_2)}(x)-{\lambda'}_{1i}^2\}.
            \end{split}
        \end{equation*}
        Also, by the property of the Kronecker product,
        \begin{equation*}
            \begin{split}
                \det (xI_{n_2}-A(\Gamma_2)\otimes I_{n_1})&=\big(\det (xI_{n_2}-A(\Gamma_2))\big)^{n_1} (\det(I_{n_1}))^{n_2}\\
                &=\prod_{i=1}^{n_2}(x-\lambda_{2i})^{n_1}.
            \end{split}
        \end{equation*}
        Hence, $f(A(\Gamma_1\circledast\Gamma_2),x)=\prod_{i=1}^{n_2}(x-\lambda_{2i})^{n_1} \prod_{i=1}^{n_1}(x^2-x\chi_{A(\Gamma_2)}(x)-{\lambda'}_{1i}^2).$
        \end{proof}

        \begin{corollary} \label{C0}
            If $\Gamma_1$ is balanced, then $\Gamma_1$ and $~\Gamma_{1\mu}$ are co-spectral and the characteristics polynomial of $\Gamma_1\circledast\Gamma_2$ is $$f(A(\Gamma_1\circledast\Gamma_2),x)=\prod_{i=1}^{n_2}(x-\lambda_{2i})^{n_1} \prod_{i=1}^{n_1}(x^2-x\chi_{A(\Gamma_2)}(x)-\lambda_{1i}^{2}).$$
        \end{corollary}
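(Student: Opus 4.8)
The plan is to derive Corollary \ref{C0} directly from Theorem \ref{Theorem1} by showing that the balancedness of $\Gamma_1$ forces $\Gamma_{1\mu}$ to be switching equivalent to $\Gamma_1$, hence $A$-cospectral. First I would invoke Theorem \ref{Th1}: since $\Gamma_1=(G_1,\sigma_1,\mu_1)$ is balanced, there is a marking (namely the canonical $\mu_1$, or any marking witnessing balance) with $\sigma_1(uv)=\mu_1(u)\mu_1(v)$ for every edge $uv\in E(G_1)$. But by Definition \ref{D5}, the $\mu$-signed graph $\Gamma_{1\mu}$ has signature $\sigma_{1\mu}(uv)=\mu_1(u)\mu_1(v)$ on the same underlying graph $G_1$. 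Therefore $\sigma_{1\mu}=\sigma_1$, so in fact $\Gamma_{1\mu}=\Gamma_1$ as signed graphs (if the balance-witnessing marking coincides with the canonical marking used to define $\Gamma_{1\mu}$); more robustly, $\Gamma_{1\mu}\sim\Gamma_1$ via the switching function $\theta=\mu_1\cdot(\text{witness marking})$, since switching by the product of two markings carries one signature to the other.

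Next I would record the standard fact that switching-equivalent signed graphs have the same adjacency spectrum: if $\theta$ is a switching function and $D_\theta=\mathrm{diag}(\theta(u_1),\ldots,\theta(u_{n_1}))$, then $A(\Gamma_{1\mu})=D_\theta A(\Gamma_1)D_\theta$, and since $D_\theta^2=I_{n_1}$ this is a similarity transformation, so $A(\Gamma_{1\mu})$ and $A(\Gamma_1)$ have identical eigenvalues. Consequently $\lambda'_{1i}=\lambda_{1i}$ for $i=1,2,\ldots,n_1$ (after matching the orderings), and in particular ${\lambda'}_{1i}^{2}=\lambda_{1i}^{2}$.

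Finally I would substitute ${\lambda'}_{1i}^{2}=\lambda_{1i}^{2}$ into the formula of Theorem \ref{Theorem1}:
\[
f(A(\Gamma_1\circledast\Gamma_2),x)=\prod_{i=1}^{n_2}(x-\lambda_{2i})^{n_1}\prod_{i=1}^{n_1}\bigl(x^2-x\chi_{A(\Gamma_2)}(x)-{\lambda'}_{1i}^{2}\bigr)=\prod_{i=1}^{n_2}(x-\lambda_{2i})^{n_1}\prod_{i=1}^{n_1}\bigl(x^2-x\chi_{A(\Gamma_2)}(x)-\lambda_{1i}^{2}\bigr),
\]
which is exactly the claimed expression, and separately state the cospectrality of $\Gamma_1$ and $\Gamma_{1\mu}$ as the first assertion of the corollary. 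The only genuinely substantive point — and the one I would be careful about — is the first step: justifying that balance of $\Gamma_1$ makes $\Gamma_{1\mu}$ switching equivalent to $\Gamma_1$. This hinges on the precise interplay between Harary's criterion (Theorem \ref{Th1}) and the canonical-marking convention adopted in the preliminaries; one must make sure the marking used in Definition \ref{D5} is the one for which balance gives $\sigma_1=\sigma_{1\mu}$, or else fall back on the switching-function argument above. Everything after that is a one-line substitution, so no real obstacle remains once the cospectrality is in hand.
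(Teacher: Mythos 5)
Your proposal is correct and follows exactly the route the paper intends: the paper states Corollary~\ref{C0} without proof as an immediate consequence of Theorem~\ref{Theorem1}, relying on the fact that balance of $\Gamma_1$ makes $\Gamma_1$ and $\Gamma_{1\mu}$ switching equivalent, hence $A$-cospectral, so ${\lambda'}_{1i}^2=\lambda_{1i}^2$ may be substituted. Your careful handling of the one subtle point --- that the canonical marking need not itself witness balance, so one switches by the product $\theta=\mu_1\cdot\nu$ of the canonical marking and a balance-witnessing marking (equivalently, both signed graphs are balanced and hence each switching equivalent to the all-positive signature) --- correctly fills in the detail the paper leaves implicit.
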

    We use the following result by Amrik et.al.~\cite{adhikari2023corona} in the following corollary.\\
     Let $\Gamma=(G,\sigma,\mu)$ be a non-null, $(r,k)$-co-regular signed graph on $n$ nodes, then signed $A(\Gamma)-$coronal of $\Gamma$ is, \[\chi_{A(\Gamma)}(\lambda)=\frac{n}{\lambda-k}\].

     \begin{corollary} \label{C1}
         Let $\Gamma_1$ be a signed graph with $n_1$ vertices, and $\Gamma_2$ be a $(r,k)-$co-regular signed graph with $n_2$ vertices. Also let the multiplicity of eigenvalue $k$ of $\Gamma_2$  be $p$, then the A-spectrum of $\Gamma_1\circledast\Gamma_2$ consist of,
         \begin{enumerate}
             \item $\lambda_{2i}$, repeated $n_1$ times, for each $i=2,3,\ldots,n_2$.
             \item Three roots of the equation $x^3-kx^2-(n_2+{\lambda'}_{1i}^2)x+k{\lambda'}_{1i}^2=0$, for each eigenvalue ${\lambda'}_{1i}(i=1,2,\ldots,n_1)$ of $\Gamma_{1\mu}$.
             \item Eigenvalue $k$ with multiplicity $n_1(p-1)$.
         \end{enumerate}
    \end{corollary}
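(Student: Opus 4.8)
The plan is to specialize the adjacency characteristic polynomial of Theorem~\ref{Theorem1} to the co-regular setting and then factor it completely. First I would invoke the quoted formula of Adhikari et al.: since $\Gamma_2$ is $(r,k)$-co-regular on $n_2$ vertices, $\chi_{A(\Gamma_2)}(x)=\frac{n_2}{x-k}$. Substituting this into the second product of
$$f\big(A(\Gamma_1\circledast\Gamma_2),x\big)=\prod_{i=1}^{n_2}(x-\lambda_{2i})^{n_1}\prod_{i=1}^{n_1}\big(x^2-x\chi_{A(\Gamma_2)}(x)-{\lambda'}_{1i}^2\big)$$
turns each quadratic factor into $x^2-\frac{n_2x}{x-k}-{\lambda'}_{1i}^2$, which after clearing the denominator $(x-k)$ equals $\frac{1}{x-k}\big(x^3-kx^2-(n_2+{\lambda'}_{1i}^2)x+k{\lambda'}_{1i}^2\big)$. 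Hence the second product becomes $(x-k)^{-n_1}\prod_{i=1}^{n_1}\big(x^3-kx^2-(n_2+{\lambda'}_{1i}^2)x+k{\lambda'}_{1i}^2\big)$.

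Next I would merge this with the first product. Since $k$ is an eigenvalue of $A(\Gamma_2)$ of multiplicity $p$ (it is an eigenvalue at all because $\mu(\Gamma_2)$ is an eigenvector for $k$, which is also why the coronal has exactly this pole), the factor $\prod_{i=1}^{n_2}(x-\lambda_{2i})^{n_1}$ carries $(x-k)^{n_1p}$; absorbing the $(x-k)^{-n_1}$ produced by the coronal leaves $(x-k)^{n_1(p-1)}$, which incidentally re-confirms that the right-hand side of Theorem~\ref{Theorem1} is a genuine polynomial ($p\ge 1$). Separating the eigenvalues of $\Gamma_2$ equal to $k$ from the rest then yields
$$f\big(A(\Gamma_1\circledast\Gamma_2),x\big)=(x-k)^{n_1(p-1)}\!\!\prod_{\lambda_{2i}\ne k}\!(x-\lambda_{2i})^{n_1}\prod_{i=1}^{n_1}\big(x^3-kx^2-(n_2+{\lambda'}_{1i}^2)x+k{\lambda'}_{1i}^2\big).$$

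Finally I would read the roots off this product: the eigenvalues $\lambda_{2i}$ of $\Gamma_2$ different from $k$, each with multiplicity $n_1$, which is item~(1); the three roots of $x^3-kx^2-(n_2+{\lambda'}_{1i}^2)x+k{\lambda'}_{1i}^2=0$ for each eigenvalue ${\lambda'}_{1i}$ of $\Gamma_{1\mu}$, $i=1,\dots,n_1$, which is item~(2); and the eigenvalue $k$ with multiplicity $n_1(p-1)$, which is item~(3). A dimension check, $n_1(n_2-p)+3n_1+n_1(p-1)=n_1(n_2+2)=|V(\Gamma_1\circledast\Gamma_2)|$, shows the list is complete. The only delicate point — and the step I would spell out carefully — is the bookkeeping of the factor $x-k$: one must verify that the $n_1$ poles introduced by $\chi_{A(\Gamma_2)}(x)=\frac{n_2}{x-k}$ are cancelled exactly by $n_1$ of the $n_1p$ zeros contributed by $\prod_i(x-\lambda_{2i})^{n_1}$, so that the cubics are not spuriously forced to vanish at $x=k$ and the multiplicity of $k$ in the spectrum comes out exactly as stated (the sole degenerate case being $k=0$, in which $x=0$ additionally solves every cubic and the multiplicity of the eigenvalue $0$ grows by a further $n_1$).
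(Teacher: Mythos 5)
Your proposal is correct and follows essentially the same route as the paper: substitute $\chi_{A(\Gamma_2)}(x)=\frac{n_2}{x-k}$ into Theorem~\ref{Theorem1}, handle the pole at $x=k$, and read off the spectrum. Your explicit bookkeeping of the $(x-k)$ cancellation and the dimension check $n_1(n_2-p)+3n_1+n_1(p-1)=n_1(n_2+2)$ are more careful than the paper's terse reading-off, and they correctly pin down item (1) as the eigenvalues of $\Gamma_2$ other than $k$, which is the reading needed for the multiplicities to be consistent when $p>1$.
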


    \begin{proof}
        From above result, $$\chi_{A(\Gamma_2)}(x)=\frac{n_2}{x-k}$$
        The only pole of $\chi_{A(\Gamma_2)}(x)$ is $x=k$, which is equivalent to the eigenvalue $x=k=\lambda_{21}$(say) of $\Gamma_2$. So by Theorem~\ref{Theorem1} the adjacency spectrum of $\Gamma_1\circledast\Gamma_2$ is given by,
        \begin{enumerate}
            \item $\lambda_{2i}$, repeated $n_1$ times, for each $i=2,3,\ldots,n_2$.
            \item $3n_1$ eigenvalues are obtained by solving,
            \begin{equation*}
                \begin{split}
                    &x^2-x\chi_{A(\Gamma_2)}(x)-{\lambda'}_{1i}^2=0\\
                    \implies&x^2-\frac{n_2x}{x-k}-{\lambda'}_{1i}^2=0\\
                    \implies&x^3-kx^2-(n_2+{\lambda'}_{1i}^2)x+k{\lambda'}_{1i}^2=0
                \end{split}
            \end{equation*}
            for each eigenvalue ${\lambda'}_{1i}(i=1,2,\ldots,n_1)$ of $\Gamma_{1\mu}$.
            \item The remaining $n_1(p-1)$ are equal to the pole $x=k$ of $\chi_{A(\Gamma_2)}(x)$.
        \end{enumerate}
    \end{proof}

    Similarly, we use the following result by Bishal et al.~\cite{sonar2023spectrum} in the following corollary.\\
    Let $\Gamma=(K_{1,n},\sigma,\mu)$ be a signed star with $V(\Gamma)=\{v_1,v_2,\ldots,v_{n+1}\}$ such that $d(v_1)=n$, then \[\chi_{A(\Gamma)}(\lambda)=\frac{(n+1)\lambda+2n\mu(v_1)}{\lambda^2-n}.\]
    \begin{corollary}~\label{C2}
        Let $\Gamma_1$ be a signed graph with $n_1$ vertices and $\Gamma_2=(K_{1,n_2},\sigma_2,\mu_2)$ be a signed star on $n_2+1$ vertices with $V(\Gamma_2)=\{v_1,v_2,\ldots,v_{n_2+1}\}$ where $d(v_1)=n_2$. Then the spectrum of $\Gamma_1\circledast\Gamma_2$ consist of,
        \begin{enumerate}
            \item The eigenvalue $0$ with multiplicity $n_1(n_2-1).$
            \item The other $4n_1$ eigenvalues are obtained by solving the equation $x^4-(2n_2+1+{\lambda}_{1i}^2)x^2-2n_2\mu(v_1)x+n_2{\lambda}_{1i}^2=0,$ for each eigenvalue ${\lambda}_{1i}(i=1,2,\ldots,n_1)$ of $\Gamma_{1}$.
        \end{enumerate}
    \end{corollary}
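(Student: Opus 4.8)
The plan is to specialize Theorem~\ref{Theorem1} to $\Gamma_2=(K_{1,n_2},\sigma_2,\mu_2)$, insert the known $A$-coronal of a signed star, and simplify the resulting product. Since $K_{1,n_2}$ is a tree it has no cycles, so $\Gamma_2$ is balanced and $A(\Gamma_2)$ is cospectral with the unsigned star $K_{1,n_2}$; its spectrum is $\sqrt{n_2}$, $-\sqrt{n_2}$, and $0$ with multiplicity $n_2-1$. Applying Theorem~\ref{Theorem1} with the understanding that here $\Gamma_2$ has $n_2+1$ vertices (so the quantity playing the role of ``$n_2$'' in that theorem is $n_2+1$), the first factor becomes
\[
\prod_{i=1}^{n_2+1}(x-\lambda_{2i})^{n_1}=\bigl((x^2-n_2)\,x^{\,n_2-1}\bigr)^{n_1}=(x^2-n_2)^{n_1}\,x^{\,n_1(n_2-1)}.
\]

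Next I would substitute the signed star coronal $\chi_{A(\Gamma_2)}(x)=\dfrac{(n_2+1)x+2n_2\mu(v_1)}{x^2-n_2}$ into the quadratic-in-$x$ factor appearing in Theorem~\ref{Theorem1}. Clearing the denominator $x^2-n_2$ gives, for each eigenvalue ${\lambda'}_{1i}$ of $\Gamma_{1\mu}$,
\[
x^2-x\,\chi_{A(\Gamma_2)}(x)-{\lambda'}_{1i}^2=\frac{(x^2-n_2)(x^2-{\lambda'}_{1i}^2)-x\bigl((n_2+1)x+2n_2\mu(v_1)\bigr)}{x^2-n_2}=\frac{Q_i(x)}{x^2-n_2},
\]
where a short expansion yields $Q_i(x)=x^4-(2n_2+1+{\lambda'}_{1i}^2)x^2-2n_2\mu(v_1)x+n_2{\lambda'}_{1i}^2$. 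Multiplying the two factors, the $(x^2-n_2)^{n_1}$ produced by the first factor cancels the $n_1$ copies of $(x^2-n_2)$ in the denominators, leaving
\[
f(A(\Gamma_1\circledast\Gamma_2),x)=x^{\,n_1(n_2-1)}\prod_{i=1}^{n_1}\Bigl(x^4-(2n_2+1+{\lambda'}_{1i}^2)x^2-2n_2\mu(v_1)x+n_2{\lambda'}_{1i}^2\Bigr).
\]
From this factorization one reads off that $0$ is an eigenvalue of multiplicity $n_1(n_2-1)$ and that the remaining $4n_1$ eigenvalues are the roots of the $n_1$ quartics $Q_i(x)=0$; a count check gives $n_1(n_2-1)+4n_1=n_1(n_2+3)=|V_1|\,(2+|V_2|)$, which is the order of $\Gamma_1\circledast\Gamma_2$, as required.

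There is essentially no hard analytic step here: the argument is a substitution followed by a degree-four polynomial expansion. The two points that need care are (a) the index bookkeeping caused by $K_{1,n_2}$ having $n_2+1$ rather than $n_2$ vertices, which is precisely what produces the surviving factor $x^{n_1(n_2-1)}$ and hence the eigenvalue $0$ of multiplicity $n_1(n_2-1)$; and (b) the fact that the quartics naturally involve the eigenvalues ${\lambda'}_{1i}$ of $\Gamma_{1\mu}$ rather than of $\Gamma_1$ — to obtain the statement exactly as written (with ${\lambda}_{1i}$ of $\Gamma_1$) one invokes Corollary~\ref{C0}, i.e. one takes $\Gamma_1$ balanced so that $\Gamma_1$ and $\Gamma_{1\mu}$ are cospectral and ${\lambda'}_{1i}={\lambda}_{1i}$; for arbitrary $\Gamma_1$ the identical computation holds with ${\lambda'}_{1i}$ in place of ${\lambda}_{1i}$. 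One should also note the harmless convention that if some root of a quartic $Q_i$ happens to equal $0$, the stated multiplicity of the eigenvalue $0$ is a lower bound.
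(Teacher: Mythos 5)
Your proposal is correct and follows essentially the same route as the paper: specialize Theorem~\ref{Theorem1}, substitute the signed star coronal $\chi_{A(K_{1,n_2})}(x)=\frac{(n_2+1)x+2n_2\mu(v_1)}{x^2-n_2}$, and let the factors $(x^2-n_2)^{n_1}$ coming from the extreme eigenvalues $\pm\sqrt{n_2}$ cancel the coronal's poles, leaving $x^{n_1(n_2-1)}$ and the $n_1$ quartics. Your explicit bookkeeping of the $n_2+1$ vertices and your remark that the statement's use of $\lambda_{1i}$ (rather than $\lambda'_{1i}$) rests on Corollary~\ref{C0}, i.e.\ on $\Gamma_1$ being balanced, is in fact handled more carefully than in the paper's own proof.
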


    \begin{proof}
        The adjacency spectrum of $K_{1,n_2}$ is $(-\sqrt{n_2},\sqrt{n_2},0^{(n_2-1)}).$\\
        From the above result, $$\chi_{A(\Gamma_2)}(x)=\frac{(n_2+1)x+2n_2\mu(v_1)}{x^2-n_2}$$
        The poles of $\chi_{A(\Gamma_2)}(x)$ are $x=\pm\sqrt{n_2}$ which is equivalent to the maximal and minimal eigenvalues of $\Gamma_2$.\\
        By Corollary~\ref{C0}, the spectrum of $\Gamma_1\circledast\Gamma_2$ is given by,
        \begin{enumerate}
            \item The eigenvalue $0$ with multiplicity $n_1(n_2-1)$.
            \item The other $4n_1$ eigenvalues are obtained by solving the equation,
            \begin{equation*}
                \begin{split}
                    &x^2-x\chi_{A(K_{1,n_2})}(x)-{\lambda}_{1i}^2=0\\
                    \implies&x^2-\frac{(n_2+1)x+2n_2\mu(v_1)}{x^2-n_2}x-{\lambda}_{1i}^2=0\\
                    \implies&x^4-(2n_2+1+{\lambda}_{1i}^2)x^2-2n_2\mu(v_1)x+n_2{\lambda}_{1i}^2=0
                \end{split}
            \end{equation*}
            for each eigenvalue ${\lambda}_{1i}(i=1,2,\ldots,n_1)$ of $\Gamma_{1}$.
        \end{enumerate}
        
    \end{proof}
    
     \begin{corollary}
         Let $\Gamma_1$ and $\Gamma_2$ be two $A(\Gamma_{\mu})-$co-spectral signed graphs and $\Gamma$ is any arbitrary signed graph, then 
            \begin{enumerate}
                \item $\Gamma_1\circledast\Gamma$ and $\Gamma_2\circledast\Gamma$ are $A$-co-spectral.
                \item $\Gamma\circledast\Gamma_1$ and $\Gamma\circledast\Gamma_2$ are co-spectral if $\chi_{A(\Gamma_1)}(x)=\chi_{A(\Gamma_2)}(x).$
            \end{enumerate}
     \end{corollary}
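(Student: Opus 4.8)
The plan is to read both claims straight off the product factorisation supplied by Theorem~\ref{Theorem1}. That formula writes $f(A(\Gamma_1\circledast\Gamma_2),x)$ as the product of a \emph{right-factor block} $\prod_{i=1}^{n_2}(x-\lambda_{2i})^{n_1}$, which sees only the adjacency spectrum of the second argument, and a \emph{mixed block} $\prod_{i=1}^{n_1}\bigl(x^{2}-x\,\chi_{A(\Gamma_2)}(x)-{\lambda'}_{1i}^{2}\bigr)$, which sees the squared eigenvalues of $\Gamma_{1\mu}$ together with the coronal $\chi_{A(\Gamma_2)}$. Each part of the corollary then reduces to deciding which of these two blocks is left unchanged when one of the factors is replaced.

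For (1), fix the right factor $\Gamma$ and compare the formula for $\Gamma_1\circledast\Gamma$ with that for $\Gamma_2\circledast\Gamma$. Since $\Gamma_1$ and $\Gamma_2$ are $A(\Gamma_\mu)$-cospectral, $A(\Gamma_{1\mu})$ and $A(\Gamma_{2\mu})$ have the same spectrum; in particular they have the same order $n$, so the exponents agree, and the multisets $\{{\lambda'}_{1i}^{2}\}$ and $\{{\lambda'}_{2i}^{2}\}$ coincide. The right-factor block $\prod_i(x-\lambda_i(\Gamma))^{n}$ is literally identical in the two expressions, and the mixed block depends on the $\lambda'$'s only through their multiset of squares, hence is identical too. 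Therefore the two adjacency characteristic polynomials coincide and $\Gamma_1\circledast\Gamma$, $\Gamma_2\circledast\Gamma$ are $A$-cospectral. I expect this direction to be essentially immediate.

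For (2), fix the left factor $\Gamma$ and compare $\Gamma\circledast\Gamma_1$ with $\Gamma\circledast\Gamma_2$; now the mixed block carries the common eigenvalues of $\Gamma_\mu$ and reads $\prod_i\bigl(x^{2}-x\,\chi_{A(\Gamma_j)}(x)-\lambda_i(\Gamma_\mu)^{2}\bigr)$, which is made equal for $j=1,2$ precisely by the hypothesis $\chi_{A(\Gamma_1)}(x)=\chi_{A(\Gamma_2)}(x)$. What remains is the right-factor block $\prod_i(x-\lambda_i(\Gamma_j))^{n(\Gamma)}$, and this is where the only genuine obstacle sits: $A(\Gamma_\mu)$-cospectrality of $\Gamma_1$ and $\Gamma_2$ constrains the spectra of $\Gamma_{1\mu},\Gamma_{2\mu}$, not those of $\Gamma_1,\Gamma_2$ themselves, so one must separately secure that $\Gamma_1$ and $\Gamma_2$ are $A$-cospectral. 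This holds in the case of primary interest --- when $\Gamma_1$ and $\Gamma_2$ are balanced --- since then each $\Gamma_j$ is $A$-cospectral with $\Gamma_{j\mu}$ by Corollary~\ref{C0}, and combining this with the assumed cospectrality of $\Gamma_{1\mu}$ and $\Gamma_{2\mu}$ forces $\Gamma_1$ and $\Gamma_2$ to be $A$-cospectral. Once both blocks agree we obtain $f(A(\Gamma\circledast\Gamma_1),x)=f(A(\Gamma\circledast\Gamma_2),x)$, which completes (2).
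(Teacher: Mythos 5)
Your plan of reading both claims off the factorisation in Theorem~\ref{Theorem1} is exactly how the paper treats this corollary (it is stated without proof, as an immediate consequence of that theorem), and your argument for part (1) is complete: $A(\Gamma_\mu)$-cospectrality of $\Gamma_1,\Gamma_2$ forces $n_1=n_2$ and makes the multisets $\{{\lambda'}_{1i}^{2}\}$ coincide, while the block $\prod_{i}(x-\lambda_i(\Gamma))^{n_1}$ is literally the same in both expressions, so the two characteristic polynomials agree.

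For part (2) you have put your finger on a genuine issue rather than created one. In Theorem~\ref{Theorem1} the second factor contributes its own adjacency eigenvalues $\lambda_{2i}$ to the block $\prod_{i=1}^{n_2}(x-\lambda_{2i})^{n_1}$, and cospectrality of $\Gamma_{1\mu}$ and $\Gamma_{2\mu}$ says nothing about the spectra of $\Gamma_1$ and $\Gamma_2$ themselves (since each $\Gamma_{j\mu}$ is balanced, that hypothesis only constrains the underlying unsigned graphs), nor does equality of the coronals supply it. So the literal statement of (2) does not follow from the factorisation with the stated hypotheses; what does follow is the version in which $\Gamma_1$ and $\Gamma_2$ are assumed $A$-cospectral with $\chi_{A(\Gamma_1)}(x)=\chi_{A(\Gamma_2)}(x)$. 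Your repair --- restricting to balanced $\Gamma_1,\Gamma_2$ and using Corollary~\ref{C0} to convert $A(\Gamma_\mu)$-cospectrality into $A$-cospectrality --- is correct, but be explicit that it proves only this special (equivalently, corrected) form of the claim: as written, your proof of (2) covers balanced $\Gamma_1,\Gamma_2$, and the corollary as stated in the paper appears to require that extra hypothesis (or plain $A$-cospectrality of $\Gamma_1,\Gamma_2$) in any case.
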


    \subsection{L-spectra of signed graph}
    \begin{theorem}~\label{TH2}
         Let $\Gamma_1$ be a $r_1-$regular signed graph with $n_1$ vertices and $\Gamma_2$ be any arbitrary signed graph on $n_2$ vertices with Laplacian spectrum $\gamma_{i1}\leq\gamma_{i2}\leq\ldots\leq\gamma_{in_i},~~i=1,2.$ Let $\Gamma_{1\mu}$ be the $\mu-$signed graph of $\Gamma_1$, and $\lambda'_{11},\lambda'_{12},\ldots,\lambda'_{1n_1}$ be the eigenvalue of $\Gamma_{1\mu}.$ Then the Laplacian characteristics polynomial of duplication add vertex corona $\Gamma_1\circledast\Gamma_2$ is $$f(L(\Gamma_1\circledast\Gamma_2);x)=\prod_{i=1}^{n_2}(x-1-\gamma_{2i})^{n_1} \prod_{i=1}^{n_1}\big((x-r_1-n_2-\chi_{L(\Gamma_2)}(x-1))(x-r_1)-{\lambda'}_{1i}^2\big)$$
    \end{theorem}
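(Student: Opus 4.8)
The plan is to mirror the proof of Theorem~\ref{Theorem1}, replacing the adjacency matrix with the Laplacian matrix throughout and carefully accounting for the degree contributions. First I would write down the Laplacian $L(\Gamma_1\circledast\Gamma_2)$ with respect to the partition $V(D\Gamma_1)\cup W_1\cup\cdots\cup W_{n_2}$. Using the degree formulas listed just before this theorem, the diagonal block for the $a_i$ vertices is $(r_1+n_2)I_{n_1}$ (since $\Gamma_1$ is $r_1$-regular, each $a_i$ has degree $r_1$ in $D\Gamma_1$, plus $n_2$ from the attached copy of $\Gamma_2$), the diagonal block for the $u_i$ vertices is $r_1 I_{n_1}$, and the block for the copies of $\Gamma_2$ is $\big((1+D(\Gamma_2))\otimes I_{n_1}\big) - \big(A(\Gamma_2)\otimes I_{n_1}\big) = (I_{n_2}+L(\Gamma_2))\otimes I_{n_1}$. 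The off-diagonal blocks are $-A(\Gamma_{1\mu})$ between the $u$- and $a$-blocks, and $-\mu(\Gamma_2)^T\otimes\phi(\Gamma_1)$ (resp.\ its transpose) between the $a$-block and the $\Gamma_2$-copies block; the $u$-block and the $\Gamma_2$-copies block are not adjacent. This gives
\[
L(\Gamma_1\circledast\Gamma_2)=\begin{bmatrix}
r_1 I_{n_1}& -A(\Gamma_{1\mu})& 0\\
-A(\Gamma_{1\mu})& (r_1+n_2)I_{n_1}& -\mu(\Gamma_2)^T\otimes\phi(\Gamma_1)\\
0& -\mu(\Gamma_2)\otimes\phi(\Gamma_1)& (I_{n_2}+L(\Gamma_2))\otimes I_{n_1}
\end{bmatrix}.
\]

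Next I would compute $f(L(\Gamma_1\circledast\Gamma_2);x)=\det\big(xI-L(\Gamma_1\circledast\Gamma_2)\big)$ by applying the Schur complement (Lemma~\ref{L2}) with the bottom-right block $\big((x-1)I_{n_2}-L(\Gamma_2)\big)\otimes I_{n_1}$, whose determinant is $\prod_{i=1}^{n_2}(x-1-\gamma_{2i})^{n_1}$ by the Kronecker product determinant property. The Schur complement is the $2n_1\times 2n_1$ matrix
\[
S=\begin{bmatrix}
(x-r_1)I_{n_1}& A(\Gamma_{1\mu})\\
A(\Gamma_{1\mu})& (x-r_1-n_2)I_{n_1}
\end{bmatrix}
-\begin{bmatrix}
0&0\\0&\chi_{L(\Gamma_2)}(x-1)I_{n_1}
\end{bmatrix},
\]
where the correction term comes from Equation~(\ref{Eqn2}): the product $\big(\mu(\Gamma_2)^T\otimes\phi(\Gamma_1)\big)\big(((x-1)I_{n_2}-L(\Gamma_2))^{-1}\otimes I_{n_1}\big)\big(\mu(\Gamma_2)\otimes\phi(\Gamma_1)\big)=\chi_{L(\Gamma_2)}(x-1)I_{n_1}$, and this only affects the $a$-block since the $u$-block has zero coupling to the $\Gamma_2$-copies. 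Then I apply Lemma~\ref{L2} again to $S$: using the invertible top-left block $(x-r_1)I_{n_1}$,
\[
\det S=(x-r_1)^{n_1}\det\Big((x-r_1-n_2-\chi_{L(\Gamma_2)}(x-1))I_{n_1}-\tfrac{A(\Gamma_{1\mu})^2}{x-r_1}\Big)
=\prod_{i=1}^{n_1}\big((x-r_1-n_2-\chi_{L(\Gamma_2)}(x-1))(x-r_1)-{\lambda'}_{1i}^2\big),
\]
using that $A(\Gamma_{1\mu})^2$ has eigenvalues ${\lambda'}_{1i}^2$. Multiplying the two determinants yields the claimed formula.

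I expect the main obstacle to be bookkeeping rather than any deep difficulty: getting the diagonal Laplacian blocks exactly right (in particular the $+1$ on the $\Gamma_2$-copies from the single edge joining each copy-vertex to its $a_i$, and the $+n_2$ on the $a$-block), and correctly identifying which pairs of blocks are coupled so that the Schur-complement correction lands only in the $a$-block. One should also note the step where $\chi_{L(\Gamma_2)}$ is evaluated at $x-1$ rather than $x$ — this is the shift caused by the $I_{n_2}$ in the bottom-right block $(I_{n_2}+L(\Gamma_2))\otimes I_{n_1}$, and it is exactly what Equation~(\ref{Eqn2}) is designed to handle. A minor technical caveat is that the Schur complement manipulations are valid over the rational function field $\mathbb{C}(x)$ where $(x-r_1)I_{n_1}$ and $\big((x-1)I_{n_2}-L(\Gamma_2)\big)\otimes I_{n_1}$ are invertible, and the resulting polynomial identity then holds for all $x$ by continuity; this is the same convention already used in Theorem~\ref{Theorem1}.
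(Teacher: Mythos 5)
Your proposal is correct and follows essentially the same route as the paper: the same block form of $L(\Gamma_1\circledast\Gamma_2)$ under the partition $V(D\Gamma_1)\cup W_1\cup\cdots\cup W_{n_2}$, a Schur complement against $\big((x-1)I_{n_2}-L(\Gamma_2)\big)\otimes I_{n_1}$ using Equation~(\ref{Eqn2}) to produce the $\chi_{L(\Gamma_2)}(x-1)$ correction in the $a$-block, and a second Schur complement with pivot $(x-r_1)I_{n_1}$ giving $\prod_{i=1}^{n_1}\big((x-r_1-n_2-\chi_{L(\Gamma_2)}(x-1))(x-r_1)-{\lambda'}_{1i}^2\big)$. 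No gaps; your bookkeeping of the diagonal degree blocks matches the paper's.
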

        
    \begin{proof}
        With respect to the partition mentioned in (\ref{eqn2}), the diagonal degree matrix of $\Gamma_1\circledast\Gamma_2$ can be written as, 
        $$D(\Gamma_1\circledast\Gamma_2)=\begin{bmatrix}
            r_1I_{n_1}& 0& 0\\
            0& (r_1+n_2)I_{n_1}& 0\\
            0& 0& (D(\Gamma_2)+I_{n_2})\otimes I_{n_1}
        \end{bmatrix}$$
        Then the Laplacian matrix of $\Gamma_1\circledast\Gamma_2$ is, 
        \begin{equation*}
            \begin{split}
                L&=D-A\\
                &=\begin{bmatrix}
                  r_1I_{n_1}& -A(\Gamma_{1\mu})& 0_{n_1\times n_1n_2}\\ 
            -A(\Gamma_{1\mu})& (r_1+n_2)I_{n_1}& -\mu(\Gamma_2)^T\otimes\phi(\Gamma_1)\\
            0_{n_1n_2\times n_1}& -\mu(\Gamma_2)\otimes\phi(\Gamma_1)& (L(\Gamma_2)+I_{n_2})\otimes I_{n_1}
                \end{bmatrix}
            \end{split}
        \end{equation*}
        Therefore, the Laplacian characteristics polynomial of $\Gamma_1\circledast\Gamma_2$ is,
        \begin{equation*}
            \begin{split}
                f(L(\Gamma_1\circledast\Gamma_2);x)&=\det\begin{bmatrix}
                    (x-r_1)I_{n_1}& A(\Gamma_{1\mu})& 0_{n_1\times n_1n_2}\\ 
            A(\Gamma_{1\mu})& (x-r_1-n_2)I_{n_1}& \mu(\Gamma_2)^T\otimes\phi(\Gamma_1)\\
            0_{n_1n_2\times n_1}& \mu(\Gamma_2)\otimes\phi(\Gamma_1)& ((x-1)I_{n_2}-L(\Gamma_2))\otimes I_{n_1}
                \end{bmatrix}\\
                &=\det\big(((x-1)I_{n_2}-L(\Gamma_2))\otimes I_{n_1}\big) \det(S),\\
                \text{where } S&=\begin{bmatrix}
                    (x-r_1)I_{n_1}& A(\Gamma_{1\mu})\\ 
            A(\Gamma_{1\mu})& (x-r_1-n_2)I_{n_1}
                \end{bmatrix}-\begin{bmatrix}
                    0\\
                    \mu(\Gamma_2)^T\otimes\phi(\Gamma_1)
                \end{bmatrix} \\ &~~~~\big(((x-1)I_{n_2}-L(\Gamma_2))\otimes I_{n_1}\big)^{-1} \begin{bmatrix}
                    0_{n_1n_2\times n_1}& \mu(\Gamma_2)\otimes\phi(\Gamma_1)
                \end{bmatrix}\\
            \end{split}
        \end{equation*}
        Using Equation~(\ref{Eqn2}) in the above equation we get: -\\
        \begin{equation*}
            \begin{split}
                S&=\begin{bmatrix}
                    (x-r_1)I_{n_1}& A(\Gamma_{1\mu})\\
                    A(\Gamma_{1\mu})& (x-r_1-n_2)I_{n_1}
                \end{bmatrix}-\begin{bmatrix}
                    0& 0\\
                    0& \chi_{L(\Gamma_2)}(x-1)I_{n_1}
                \end{bmatrix}\\
                &=\begin{bmatrix}
                    (x-r_1)I_{n_1}& A(\Gamma_{1\mu})\\
                    A(\Gamma_{1\mu})& (x-r_1-n_2)I_{n_1}-\chi_{L(\Gamma_2)}(x-1)I_{n_1}
                \end{bmatrix}
            \end{split}
        \end{equation*}
        Therefore,
        \begin{equation*}
            \begin{split}
                \det S&=(x-r_1)^{n_1}\det\Big((x-r_1-n_2-\chi_{L(\Gamma_2)}(x-1))I_{n_1}-\frac{A(\Gamma_{1\mu})^2}{x-r_1}\Big)\\
                &=\prod_{i=1}^{n_1}\big((x-r_1-n_2-\chi_{L(\Gamma_2)}(x-1))(x-r_1)-{\lambda'}_{1i}^2\big)
            \end{split}
        \end{equation*}
        Also, by the property of the Kronecker product,
        \begin{equation*}
            \begin{split}
                \det\big(((x-1)I_{n_2}-L(\Gamma_2))\otimes I_{n_1}\big)&=\det(I_{n_1})^{n_2}\det((x-1)I_{n_2}-L(\Gamma_2))^{n_1}\\
                &=\prod_{i=1}^{n_2}(x-1-\gamma_{2i})^{n_1}.
            \end{split}
        \end{equation*}
        Hence, $f(L(\Gamma_1\circledast\Gamma_2);x)=\prod_{i=1}^{n_2}(x-1-\gamma_{2i})^{n_1} \prod_{i=1}^{n_1}\big((x-r_1-n_2-\chi_{L(\Gamma_2)}(x-1))(x-r_1)-{\lambda'}_{1i}^2\big).$
    \end{proof}

    \begin{corollary}.\\ \vspace{-20pt}
         \begin{enumerate}
            \item Let $\Gamma_1$ and $\Gamma_2$ be two $A(\Gamma_{\mu})$-co-spectral regular signed graphs and $\Gamma$ is any arbitrary signed graph then $\Gamma_1\circledast\Gamma$ and $\Gamma_2\circledast\Gamma$ are Laplacian co-spectral.
            \item Let $\Gamma$ be a regular signed graph and $\Gamma_1$ and $\Gamma_2$ be two Laplacian co-spectral signed graphs then $\Gamma\circledast\Gamma_1$ and $\Gamma\circledast\Gamma_2$ are Laplacian co-spectral if $\chi_{L(\Gamma_1)}(x)=\chi_{L(\Gamma_2)}(x).$
        \end{enumerate}
    \end{corollary}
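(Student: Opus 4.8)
The plan is to obtain both statements as immediate consequences of the Laplacian characteristic polynomial formula in Theorem~\ref{TH2}, by checking in each case that every quantity occurring in that formula is preserved under the given hypotheses, so that the two polynomials being compared are literally identical.

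For part (1): first I would record that $A(\Gamma_\mu)$-co-spectrality of $\Gamma_1$ and $\Gamma_2$ forces them to have the same order $n_1$ (this is the degree of the characteristic polynomial of the $\mu$-adjacency matrix) and the same number of edges, since $\operatorname{tr}A(\Gamma_{1\mu})^2 = \operatorname{tr}A(\Gamma_{2\mu})^2 = 2|E|$; as both are regular, equal order and equal edge count give the same regularity $r_1$. Now apply Theorem~\ref{TH2} to $\Gamma_1\circledast\Gamma$ and to $\Gamma_2\circledast\Gamma$. In the resulting product $\prod_{i=1}^{n_2}(x-1-\gamma_{2i})^{n_1}$, the Laplacian eigenvalues $\gamma_{2i}$ and the order $n_2$ come only from the common graph $\Gamma$, and the exponent $n_1$ is common; in the product $\prod_{i=1}^{n_1}\big((x-r_1-n_2-\chi_{L(\Gamma)}(x-1))(x-r_1)-{\lambda'}_{1i}^2\big)$, the quantities $n_2$ and $\chi_{L(\Gamma)}(x-1)$ again come only from $\Gamma$, the value $r_1$ is now known to be common, and the multiset $\{{\lambda'}_{1i}\}$ of eigenvalues of the $\mu$-signed graph is exactly what $A(\Gamma_\mu)$-co-spectrality guarantees to be common. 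Hence $f(L(\Gamma_1\circledast\Gamma);x)=f(L(\Gamma_2\circledast\Gamma);x)$.

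For part (2): here $\Gamma$ (regular, order $m$, say) plays the role of the first factor, so Theorem~\ref{TH2} gives, for $j=1,2$, $f(L(\Gamma\circledast\Gamma_j);x)=\prod_{i}(x-1-\gamma_{ji})^{m}\prod_{i=1}^{m}\big((x-r-n-\chi_{L(\Gamma_j)}(x-1))(x-r)-{\lambda'}_{i}^2\big)$, where $r$ is the regularity of $\Gamma$, the $\lambda'_i$ are the eigenvalues of the $\mu$-signed graph of $\Gamma$, $n=|V(\Gamma_j)|$, and $\gamma_{ji}$ are the Laplacian eigenvalues of $\Gamma_j$. Laplacian co-spectrality of $\Gamma_1$ and $\Gamma_2$ makes $n$ and the multiset $\{\gamma_{ji}\}$ common, so the first product agrees; in the second product $r$ and $\{\lambda'_i\}$ depend only on $\Gamma$, and the remaining term $\chi_{L(\Gamma_j)}(x-1)$ is made common precisely by the extra hypothesis $\chi_{L(\Gamma_1)}(x)=\chi_{L(\Gamma_2)}(x)$. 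Therefore $f(L(\Gamma\circledast\Gamma_1);x)=f(L(\Gamma\circledast\Gamma_2);x)$.

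The only genuinely substantive point — and the thing worth stating explicitly — is the bookkeeping of which ingredients of the formula in Theorem~\ref{TH2} are spectral invariants and which are not: the $L$-coronal $\chi_{L(\cdot)}$ is \emph{not} determined by the Laplacian spectrum (it also involves the marking vector), which is exactly why part (2) must carry the hypothesis $\chi_{L(\Gamma_1)}=\chi_{L(\Gamma_2)}$, whereas in part (1) the analogous coronal $\chi_{L(\Gamma)}$ is automatically shared by the two products. Beyond that observation and the short argument that $A(\Gamma_\mu)$-co-spectral regular graphs share a common regularity, everything is a direct substitution into Theorem~\ref{TH2}.
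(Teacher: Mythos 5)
Your proposal is correct and is essentially the argument the paper intends: the corollary is stated as a direct consequence of Theorem~\ref{TH2}, and your substitution check (together with the observation that $A(\Gamma_\mu)$-co-spectral regular signed graphs share order, edge count, and hence regularity, and that the $L$-coronal is the one ingredient not determined by the Laplacian spectrum) is exactly the verification needed. No discrepancy with the paper's approach.
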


    \begin{theorem}
         Let $\Gamma_1$ be a $r_1-$regular signed graph with $n_1$ vertices and $\Gamma_2$ be any arbitrary signed graph on $n_2$ vertices with signless Laplacian spectrum $\nu_{i1}\leq\nu_{i2}\leq\ldots\leq\nu_{in_i},~~i=1,2.$ Let $\Gamma_{1\mu}$ be the $\mu-$signed graph of $\Gamma_1$, and $\lambda'_{11},\lambda'_{12},\ldots,\lambda'_{1n_1}$ be the eigenvalue of $\Gamma_{1\mu}.$ Then the signless Laplacian characteristics polynomial of duplication add vertex corona $\Gamma_1\circledast\Gamma_2$ is $$f(Q(\Gamma_1\circledast\Gamma_2);x)=\prod_{i=1}^{n_2}(x-1-\nu_{2i})^{n_1} \prod_{i=1}^{n_1}\big((x-r_1-n_2-\chi_{Q(\Gamma_2)}(x-1))(x-r_1)-{\lambda'}_{1i}^2\big)$$
    \end{theorem}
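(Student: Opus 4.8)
The plan is to imitate, \emph{mutatis mutandis}, the proof of Theorem~\ref{TH2}, replacing the Laplacian $L=D-A$ by the signless Laplacian $Q=D+A$. Since the degree matrix $D(\Gamma_1\circledast\Gamma_2)$ with respect to the partition $V(D\Gamma_1)\cup W_1\cup\cdots\cup W_{n_2}$ from (\ref{eqn2}) is exactly the block-diagonal matrix already used in Theorem~\ref{TH2}, and the adjacency matrix is the one written in Theorem~\ref{Theorem1}, adding them gives
$$Q(\Gamma_1\circledast\Gamma_2)=\begin{bmatrix}
            r_1 I_{n_1}& A(\Gamma_{1\mu})& 0_{n_1\times n_1n_2}\\
            A(\Gamma_{1\mu})& (r_1+n_2)I_{n_1}& \mu(\Gamma_2)^T\otimes\phi(\Gamma_1)\\
            0_{n_1n_2\times n_1}& \mu(\Gamma_2)\otimes\phi(\Gamma_1)& (Q(\Gamma_2)+I_{n_2})\otimes I_{n_1}
        \end{bmatrix},$$
the only differences from the Laplacian case being that the $(3,3)$-block is now $(Q(\Gamma_2)+I_{n_2})\otimes I_{n_1}$ (using $(D(\Gamma_2)+I_{n_2})\otimes I_{n_1}+A(\Gamma_2)\otimes I_{n_1}=(Q(\Gamma_2)+I_{n_2})\otimes I_{n_1}$) and the mixed off-diagonal blocks carry a $+$ sign.

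Next I would form $xI_{n_1(2+n_2)}-Q(\Gamma_1\circledast\Gamma_2)$ and apply the Schur-complement identity (Lemma~\ref{L2}), pulling out the invertible bottom-right block $((x-1)I_{n_2}-Q(\Gamma_2))\otimes I_{n_1}$, whose determinant is $\prod_{i=1}^{n_2}(x-1-\nu_{2i})^{n_1}$ by property~5 of the Kronecker product. This leaves $\det(S)$ for the $2n_1\times 2n_1$ Schur complement
$$S=\begin{bmatrix}
            (x-r_1)I_{n_1}& -A(\Gamma_{1\mu})\\
            -A(\Gamma_{1\mu})& (x-r_1-n_2)I_{n_1}
        \end{bmatrix}-\begin{bmatrix}0\\ \mu(\Gamma_2)^T\otimes\phi(\Gamma_1)\end{bmatrix}\big(((x-1)I_{n_2}-Q(\Gamma_2))\otimes I_{n_1}\big)^{-1}\begin{bmatrix}0& \mu(\Gamma_2)\otimes\phi(\Gamma_1)\end{bmatrix}.$$
The key step is to recognise that the correction term equals the block matrix carrying $\chi_{Q(\Gamma_2)}(x-1)I_{n_1}$ in its $(2,2)$-slot and zeros elsewhere: this is precisely Equation~(\ref{Eqn2}) applied with $M(\Gamma_2)=Q(\Gamma_2)$, combined with property~3 of the Kronecker product and the identity $\phi(\Gamma_1)^2=I_{n_1}$. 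Hence $S$ reduces to the matrix with blocks $(x-r_1)I_{n_1}$, $-A(\Gamma_{1\mu})$, $-A(\Gamma_{1\mu})$, and $(x-r_1-n_2-\chi_{Q(\Gamma_2)}(x-1))I_{n_1}$.

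Finally I would compute $\det S$ by a second use of Lemma~\ref{L2} on the block $(x-r_1)I_{n_1}$, giving $\det S=(x-r_1)^{n_1}\det\big((x-r_1-n_2-\chi_{Q(\Gamma_2)}(x-1))I_{n_1}-A(\Gamma_{1\mu})^2/(x-r_1)\big)$; since $A(\Gamma_{1\mu})$ is symmetric with eigenvalues $\lambda'_{1i}$, this equals $\prod_{i=1}^{n_1}\big((x-r_1-n_2-\chi_{Q(\Gamma_2)}(x-1))(x-r_1)-{\lambda'}_{1i}^2\big)$, and multiplying by the Kronecker factor yields the stated formula. The main (and essentially only) obstacle is the same one already dealt with for the Laplacian in Theorem~\ref{TH2}: confirming that the mixed Kronecker expression $(\mu(\Gamma_2)^T\otimes\phi(\Gamma_1))\big(((x-1)I_{n_2}-Q(\Gamma_2))^{-1}\otimes I_{n_1}\big)(\mu(\Gamma_2)\otimes\phi(\Gamma_1))$ collapses to the scalar coronal $\chi_{Q(\Gamma_2)}(x-1)$ times $I_{n_1}$; but this is exactly what Equation~(\ref{Eqn2}) supplies, so no genuinely new computation is needed, and the rest is the routine block-determinant bookkeeping already performed in Theorem~\ref{TH2}.
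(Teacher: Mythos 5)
Your proposal is correct and follows exactly the route the paper takes: it writes down the same block form of $Q(\Gamma_1\circledast\Gamma_2)$ and then repeats the Schur-complement and coronal argument of Theorem~\ref{TH2} with $L$ replaced by $Q$ (the paper itself only states the matrix and says the rest is "similar to Theorem~\ref{TH2}"). The sign changes in the off-diagonal blocks cancel in the Schur complement, as you note implicitly, so no new issue arises.
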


    \begin{proof}
        The signless Laplacian matrix of $\Gamma_1\circledast\Gamma_2$ is,\\
        \begin{equation*}
            \begin{split}
                Q&=D+A\\
                &=\begin{bmatrix}
                  r_1I_{n_1}& A(\Gamma_{1\mu})& 0_{n_1\times n_1n_2}\\ 
            A(\Gamma_{1\mu})& (r_1+n_2)I_{n_1}& \mu(\Gamma_2)^T\otimes\phi(\Gamma_1)\\
            0_{n_1n_2\times n_1}& \mu(\Gamma_2)\otimes\phi(\Gamma_1)& (Q(\Gamma_2)+I_{n_2})\otimes I_{n_1}
                \end{bmatrix}
            \end{split}
        \end{equation*}
        The latter part of the proof is similar to Theorem~\ref{TH2}.
    \end{proof}

    \begin{corollary}.\\ \vspace{-20pt}
         \begin{enumerate}
            \item Let $\Gamma_1$ and $\Gamma_2$ be two $A(\Gamma_{\mu})-$co-spectral regular signed graphs and $\Gamma$ is any arbitrary signed graph then $\Gamma_1\circledast\Gamma$ and $\Gamma_2\circledast\Gamma$ are signless Laplacian co-spectral.
            \item Let $\Gamma$ be a regular signed graph and $\Gamma_1$ and $\Gamma_2$ be two signless Laplacian co-spectral signed graphs then $\Gamma\circledast\Gamma_1$ and $\Gamma\circledast\Gamma_2$ are signless Laplacian co-spectral if $\chi_{Q(\Gamma_1)}(x)=\chi_{Q(\Gamma_2)}(x).$
        \end{enumerate}
    \end{corollary}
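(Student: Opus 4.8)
The plan is to deduce both statements directly from the signless Laplacian characteristic polynomial of $\Gamma_1\circledast\Gamma_2$ established in the preceding theorem, by substituting the respective hypotheses and checking that the two characteristic polynomials being compared agree factor by factor. Two signed graphs are $Q$-co-spectral precisely when their signless Laplacian characteristic polynomials coincide, so in each case it suffices to verify such an equality of polynomials. Throughout I write $\chi_{Q}$ for the signed $Q$-coronal and $\lambda'$ for the eigenvalues of the relevant $\mu$-signed adjacency matrix, exactly as in that theorem.

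For (1), the graphs $\Gamma_1,\Gamma_2$ play the role of the regular first factor and $\Gamma$ the role of the arbitrary second factor. Write $n$ for the common order of $\Gamma_1,\Gamma_2$ (equal because $A(\Gamma_\mu)$-co-spectrality forces equal degrees of the characteristic polynomials), $r_1,r_2$ for their regularities, $n_2$ for the order of $\Gamma$, and $\nu_i$ for the signless Laplacian eigenvalues of $\Gamma$. The theorem gives
$$f(Q(\Gamma_j\circledast\Gamma);x)=\prod_{i=1}^{n_2}(x-1-\nu_{i})^{n}\prod_{i=1}^{n}\big((x-r_j-n_2-\chi_{Q(\Gamma)}(x-1))(x-r_j)-{\lambda'}_{ji}^2\big),\quad j=1,2,$$
where $\lambda'_{ji}$ are the eigenvalues of $A(\Gamma_{j\mu})$. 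The first product and the coronal $\chi_{Q(\Gamma)}(x-1)$ depend only on $\Gamma$ and on $n$, hence are identical for $j=1,2$, and $A(\Gamma_\mu)$-co-spectrality gives the equality of multisets $\{\lambda'_{1i}\}=\{\lambda'_{2i}\}$. It remains only to check $r_1=r_2$, which is the one non-formal step: since $A(\Gamma_{j\mu})$ has zero diagonal and its underlying graph is the $r_j$-regular underlying graph of $\Gamma_j$, we have $\operatorname{tr}A(\Gamma_{j\mu})^2=\sum_i{\lambda'}_{ji}^2=\sum_i\deg(i)=n r_j$; co-spectrality forces $\sum_i{\lambda'}_{1i}^2=\sum_i{\lambda'}_{2i}^2$, and with equal $n$ this yields $r_1=r_2$. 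With $r_1=r_2$ the two polynomials coincide termwise, so $\Gamma_1\circledast\Gamma$ and $\Gamma_2\circledast\Gamma$ are $Q$-co-spectral.

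For (2), now $\Gamma$ is the regular first factor, say $r$-regular on $n_1$ vertices, and $\Gamma_1,\Gamma_2$ are the arbitrary second factors, of common order $n_2$ because they are $Q$-co-spectral. Writing $\nu_{ji}$ for the signless Laplacian eigenvalues of $\Gamma_j$ and $\lambda'_i$ for the eigenvalues of $A(\Gamma_\mu)$, the theorem yields
$$f(Q(\Gamma\circledast\Gamma_j);x)=\prod_{i=1}^{n_2}(x-1-\nu_{ji})^{n_1}\prod_{i=1}^{n_1}\big((x-r-n_2-\chi_{Q(\Gamma_j)}(x-1))(x-r)-{\lambda'}_{i}^2\big),\quad j=1,2.$$
The $Q$-co-spectrality of $\Gamma_1,\Gamma_2$ makes the first products equal, while $r$, $n_1$, $n_2$ and $\{\lambda'_i\}$ are common to both factorisations; the only remaining ingredient is $\chi_{Q(\Gamma_j)}(x-1)$, and the hypothesis $\chi_{Q(\Gamma_1)}(x)=\chi_{Q(\Gamma_2)}(x)$ (with $x$ replaced by $x-1$) makes these equal as well. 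Hence the two polynomials coincide and $\Gamma\circledast\Gamma_1,\ \Gamma\circledast\Gamma_2$ are $Q$-co-spectral.

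The only genuine obstacle is the regularity-matching step $r_1=r_2$ in part (1); everything else is a direct factor-by-factor comparison. It is worth noting that the hypotheses are exactly what is needed: in (2) the extra coronal condition $\chi_{Q(\Gamma_1)}=\chi_{Q(\Gamma_2)}$ is indispensable, since $Q$-co-spectrality of $\Gamma_1,\Gamma_2$ alone controls the first product but not the coronal appearing in the second.
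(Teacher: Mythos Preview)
Your proof is correct and follows the route the paper intends: the paper states this corollary without proof, treating it as an immediate consequence of the preceding signless Laplacian characteristic polynomial formula, and your factor-by-factor comparison is exactly the implicit argument. Your explicit verification that $r_1=r_2$ via $\operatorname{tr}A(\Gamma_{j\mu})^2=nr_j$ is a point of added rigor that the paper does not spell out; since the corollary's hypothesis only says each $\Gamma_j$ is regular (not that they share a common regularity), this step is genuinely needed and your trace argument handles it cleanly.
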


    \section{Application}
    This section discusses some applications of the signed graph product $\Gamma_1\circledast\Gamma_2$ and its spectrum in generating a family of integral signed graphs and a sequence of non-co-spectral equienergetic signed graphs.
    \subsection{Families of Integral Signed Graphs}
    A signed graph is called an integral signed graph if all the eigenvalues are integers. \cite {balinska2002survey,indulal2012spectrum}. The following Propositions show the essential conditions for $\Gamma_1\circledast\Gamma_2$ to be an integral signed graph.

    \begin{prop}
        Let $\Gamma_1$ be a signed graph with $n_1$ vertices, and $\Gamma_2$ be a $k$-net-regular signed graph with $n_2$ vertices. Then $\Gamma_1\circledast\Gamma_2$ are integral signed graphs if and only if $\Gamma_{1\mu}, \Gamma_2$ are integral signed graph and the roots of the equation, $x^3-kx^2-(n_2+{\lambda'}_{1i}^2)x+k{\lambda'}_{1i}^2=0$, for each eigenvalue ${\lambda'}_{1i}(i=1,2,\ldots,n_1)$ of $\Gamma_{1\mu}$ are integers.
    \end{prop}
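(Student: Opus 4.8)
The plan is to characterize integrality of $\Gamma_1 \circledast \Gamma_2$ by inspecting each factor of the adjacency characteristic polynomial supplied by Theorem~\ref{Theorem1}, specialized to a $k$-net-regular $\Gamma_2$. Recall that for a $k$-net-regular signed graph $\Gamma_2$ the marking vector is an eigenvector: $A(\Gamma_2)\mu(\Gamma_2) = k\mu(\Gamma_2)$, so $\mu(\Gamma_2)$ lies in the $k$-eigenspace and consequently the $A(\Gamma_2)$-coronal simplifies to $\chi_{A(\Gamma_2)}(x) = \dfrac{\mu(\Gamma_2)^T\mu(\Gamma_2)}{x-k} = \dfrac{n_2}{x-k}$, exactly as in the co-regular case used for Corollary~\ref{C1}. (I would state this coronal computation as the first step, since the paper has so far only invoked it for $(r,k)$-co-regular graphs, whereas here we only assume net-regularity; but the argument is identical once one observes $\mu(\Gamma_2)$ is the relevant eigenvector.) Substituting this into the formula of Theorem~\ref{Theorem1} and clearing the denominator $x-k$ in each of the $n_1$ quadratic-in-disguise factors converts $x^2 - x\chi_{A(\Gamma_2)}(x) - {\lambda'}_{1i}^2$ into the cubic $x^3 - kx^2 - (n_2 + {\lambda'}_{1i}^2)x + k{\lambda'}_{1i}^2$, precisely as in the proof of Corollary~\ref{C1}.

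\textbf{Next,} I would assemble the full root list of $f(A(\Gamma_1\circledast\Gamma_2),x)$ as in Corollary~\ref{C1}: the eigenvalues $\lambda_{2i}$ of $\Gamma_2$ for $i = 2,\dots,n_2$ each with multiplicity $n_1$; the eigenvalue $k$ with multiplicity $n_1(p-1)$ where $p$ is the multiplicity of $k$ in $\Gamma_2$; and for each eigenvalue ${\lambda'}_{1i}$ of $\Gamma_{1\mu}$ the three roots of the cubic $x^3 - kx^2 - (n_2 + {\lambda'}_{1i}^2)x + k{\lambda'}_{1i}^2 = 0$. Since integrality of $\Gamma_1\circledast\Gamma_2$ means every element of this list is an integer, the equivalence follows by checking both directions against this list.

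\textbf{For the ``if'' direction:} assume $\Gamma_2$ is integral, $\Gamma_{1\mu}$ is integral, and all cubic roots are integers. Then $k$ is an integer (it is an eigenvalue of the integral $\Gamma_2$), every $\lambda_{2i}$ is an integer, the repeated eigenvalue $k$ is an integer, and the cubic roots are integers by hypothesis — so the whole spectrum is integral. \textbf{For the ``only if'' direction:} assume $\Gamma_1\circledast\Gamma_2$ is integral. The numbers $\lambda_{2i}$ ($i=2,\dots,n_2$) and $k$ itself (as a root when $p\ge 2$, or as the constant term behaviour of the cubics; more robustly, $k = \lambda_{21}$ appears among the $\lambda_{2i}$ when $n_2\ge 2$, and the degenerate $n_2=1$ case is trivial) all appear in the spectrum, hence are integers, giving that $\Gamma_2$ is integral. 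The cubic roots appear in the spectrum, hence are integers. Finally, for each $i$ the three cubic roots sum to $k$ and their pairwise-product sum is $-(n_2 + {\lambda'}_{1i}^2)$ by Vieta; since $k$, $n_2$, and the roots are integers, ${\lambda'}_{1i}^2$ is an integer, and then ${\lambda'}_{1i}$ — being an algebraic integer (eigenvalue of an integer matrix $A(\Gamma_{1\mu})$) whose square is an integer — must itself be an integer, so $\Gamma_{1\mu}$ is integral.

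\textbf{The main obstacle} I anticipate is the clean recovery of integrality of ${\lambda'}_{1i}$ from integrality of the cubic roots: one must argue that ${\lambda'}_{1i}^2 \in \mathbb{Z}$ forces ${\lambda'}_{1i}\in\mathbb{Z}$, which needs the observation that ${\lambda'}_{1i}$ is an algebraic integer (so $\sqrt{{\lambda'}_{1i}^2}$ is rational only if it is an integer). A secondary subtlety is justifying the coronal formula $\chi_{A(\Gamma_2)}(x) = n_2/(x-k)$ under mere net-regularity rather than co-regularity; I would either cite the net-regular analogue or give the one-line eigenvector argument. Everything else is a direct transcription of Corollary~\ref{C1} combined with the definition of integral signed graph.
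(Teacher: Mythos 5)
Your overall route is the same as the paper's: the paper's entire proof of this proposition is the single line ``follows from Corollary~\ref{C1}'', and your ``if'' direction is a correct elaboration of that. However, two of your bridging steps are genuinely wrong. First, the eigenvector claim you use to justify $\chi_{A(\Gamma_2)}(x)=\frac{n_2}{x-k}$ under mere net-regularity is false: $k$-net-regularity says the all-ones vector satisfies $A(\Gamma_2)\mathbf{1}=k\mathbf{1}$, not that the marking vector does, and the coronal in this paper is $\mu(\Gamma_2)^T(xI-A(\Gamma_2))^{-1}\mu(\Gamma_2)$. The simplification to $\frac{n_2}{x-k}$ requires $A(\Gamma_2)\mu(\Gamma_2)=k\mu(\Gamma_2)$, which does hold in the $(r,k)$-co-regular case with canonical marking, but only because there $d^-_v=(r-k)/2$ is the same at every vertex, so $\mu(\Gamma_2)=\pm\mathbf{1}$. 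For a net-regular graph whose underlying graph is not regular the canonical marking need not be constant and the identity fails: take two triangles glued at a vertex $c$ (vertices $a,b,c,d,e$) with negative edges $ac,bc,de$; this is $0$-net-regular, its canonical marking is $(-1,-1,+1,-1,-1)$, and $(A\mu)_a=-2\neq 0$. You correctly sensed that the proposition's hypothesis (net-regular) is weaker than the hypothesis of Corollary~\ref{C1} (co-regular) --- the paper itself glosses over this mismatch --- but your one-line repair does not close that gap.

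Second, in the ``only if'' direction your final step asserts that an algebraic integer whose square is an integer must itself be an integer; this is false ($\sqrt{2}$ is a counterexample). Vieta applied to the integer roots of the cubic only yields ${\lambda'}_{1i}^2\in\mathbb{Z}$, which does not force ${\lambda'}_{1i}\in\mathbb{Z}$, so integrality of $\Gamma_{1\mu}$ is not actually recovered by your argument (the paper's one-line proof does not address this direction at all, so this is a defect you have inherited and made explicit rather than resolved). A smaller slip in the same direction: when the multiplicity $p$ of $k$ in $\Gamma_2$ equals $1$, the value $k=\lambda_{21}$ does not occur among the $\lambda_{2i}$ with $i\ge 2$, nor in item~3 of Corollary~\ref{C1}; the clean way to see $k\in\mathbb{Z}$ there is that $k$ equals the sum of the three integer roots of any one of the cubics, which is the argument you should state explicitly instead of the vague ``constant term behaviour'' remark.
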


    \begin{proof}
        The proof follows from Corollary~\ref{C1}.
    \end{proof}
    \begin{prop}
        Let $\Gamma_1$ be a signed graph with $n_1$ vertices and $\Gamma_2=(K_{1,n_2},\sigma_2,\mu_2)$ be a signed star on $n_2+1$ vertices with $V(\Gamma_2)=\{v_1,v_2,\ldots,v_{n_2+1}\}$ where $d(v_1)=n_2$. Then $\Gamma_1\circledast\Gamma_2$ are integral signed graphs if and only if $\Gamma_{1}, ~\Gamma_2$ are integral signed graph and the roots of the equation, $x^4-(2n_2+1+{\lambda}_{1i}^2)x^2-2n_2\mu(v_1)x+n_2{\lambda}_{1i}^2=0,$ for each eigenvalue ${\lambda}_{1i}(i=1,2,\ldots,n_1)$ of $\Gamma_{1}$ are integers.
    \end{prop}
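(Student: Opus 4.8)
The plan is to deduce everything from Corollary~\ref{C2}. Since $\Gamma_2=(K_{1,n_2},\sigma_2,\mu_2)$ with $d(v_1)=n_2$, that corollary already records the complete adjacency spectrum of $\Gamma_1\circledast\Gamma_2$: it is the eigenvalue $0$ with multiplicity $n_1(n_2-1)$ together with the $4n_1$ numbers that are the roots of the quartics
\[ q_i(x)=x^{4}-(2n_2+1+\lambda_{1i}^{2})x^{2}-2n_2\mu(v_1)\,x+n_2\lambda_{1i}^{2},\qquad i=1,\dots,n_1, \]
one quartic for each adjacency eigenvalue $\lambda_{1i}$ of $\Gamma_1$. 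As a cardinality check, $n_1(n_2-1)+4n_1=n_1(n_2+3)=|V_1|(2+|V_2|)$, so these numbers exhaust the spectrum of $\Gamma_1\circledast\Gamma_2$.

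For the ``if'' direction I would argue directly: if each $q_i$ has only integer roots, then the displayed list consists entirely of integers (the extra eigenvalue $0$ being an integer), hence $\Gamma_1\circledast\Gamma_2$ is an integral signed graph. Observe that here the assumed integrality of $\Gamma_1$ and $\Gamma_2$ is not actually needed beyond what the hypothesis on the roots already encodes.

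For the ``only if'' direction, suppose $\Gamma_1\circledast\Gamma_2$ is integral. Then, the roots of each $q_i$ forming a sub-multiset of $\operatorname{Spec}A(\Gamma_1\circledast\Gamma_2)$ by Corollary~\ref{C2}, all of them are integers, which is the third asserted condition. To extract the integrality of $\Gamma_1$ and of $\Gamma_2$, I would apply Vieta's relations to the integer-rooted quartic $q_i$: the absent cubic term forces the four roots to sum to $0$, the coefficient of $x^{2}$ gives $\lambda_{1i}^{2}=-(2n_2+1)-\sum_{j<k}r_jr_k\in\mathbb Z$, and the constant term gives $n_2\lambda_{1i}^{2}=\prod_j r_j\in\mathbb Z$.

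The step I expect to be the real obstacle is the passage from $\lambda_{1i}^{2}\in\mathbb Z$ (for all $i$) to $\lambda_{1i}\in\mathbb Z$, and from ``$q_i$ has integer roots'' to $n_2$ being a perfect square. Neither is automatic: $P_3$ (and also $C_8$) has all squared eigenvalues integral yet is not an integral signed graph, and $\pm\sqrt{n_2}$ is never a root of any $q_i$, so $\Gamma_2=K_{1,n_2}$ can fail to be integral while $\Gamma_1\circledast\Gamma_2$ is integral --- for instance, with $\Gamma_1=K_1$ and $\Gamma_2=K_{1,3}$ Corollary~\ref{C2} yields $\operatorname{Spec}A(\Gamma_1\circledast\Gamma_2)=\{0^{(3)},-1,-2,3\}$, which is integral although $K_{1,3}$ is not. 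Consequently what this method proves cleanly is the equivalence ``$\Gamma_1\circledast\Gamma_2$ is integral $\iff$ every root of every $q_i$ is an integer''; in the statement as written, the clause ``$\Gamma_1,\Gamma_2$ are integral'' is a correct sufficient hypothesis but not a genuine necessary one, so I would either adopt that cleaner equivalence or append an extra hypothesis on $\Gamma_1$ and on $n_2$ that forces $\lambda_{1i}^2\in\mathbb Z\Rightarrow\lambda_{1i}\in\mathbb Z$ and $\sqrt{n_2}\in\mathbb Z$.
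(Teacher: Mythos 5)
Your route is exactly the paper's: the paper's entire proof of this proposition is the one-line citation of Corollary~\ref{C2}, and you likewise read the full spectrum of $\Gamma_1\circledast\Gamma_2$ off that corollary (the multiplicity count $n_1(n_2-1)+4n_1=n_1(n_2+3)$ confirming nothing is missed). Within that route your reasoning is sound: the ``if'' direction and the necessity of the quartic roots being integers are immediate, and Vieta gives $\lambda_{1i}^2\in\mathbb{Z}$ but nothing more.

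More importantly, your scepticism about the ``only if'' direction is justified, and your counterexample is genuine: with $\Gamma_1=K_1$ and $\Gamma_2$ the all-positive star $K_{1,3}$ (so $\mu(v_1)=+1$), the product $\Gamma_1\circledast\Gamma_2$ is an isolated vertex together with the join of one vertex to $K_{1,3}$; a direct computation gives adjacency spectrum $\{3,-2,-1,0^{(3)}\}$, in agreement with Corollary~\ref{C2} since the quartic becomes $x^4-7x^2-6x=x(x+1)(x+2)(x-3)$. Thus the corona is integral while $\Gamma_2$ has eigenvalues $\pm\sqrt{3}$, so the clause ``$\Gamma_1,\Gamma_2$ are integral'' is not necessary and the biconditional as stated fails; what the method actually yields is the cleaner equivalence ``$\Gamma_1\circledast\Gamma_2$ is integral if and only if every root of every quartic $q_i$ is an integer.'' The paper's proof, being a bare appeal to Corollary~\ref{C2}, does not address this, so what you have found is a defect in the proposition (and in its proof) rather than a gap in your own argument. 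One minor quibble: your aside that $\pm\sqrt{n_2}$ is never a root of a $q_i$ fails in the degenerate case $n_2=1$ with $\mu(v_1)=-1$, but that case is harmless since $K_{1,1}$ is integral anyway, and it does not affect your main point.
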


    \begin{proof}
        The proof follows from Corollary~\ref{C2}.
    \end{proof}

    \subsection{Equienergetic signed graphs}
    In the following Theorem~\ref{TH3}, we describe a method for constructing non-co-spectral equienergetic signed graphs.

    \begin{theorem}~\label{TH3}
        Let $\Gamma_1$ and $\Gamma_2$ be two non-co-spectral equienergetic signed graphs of order $m$ with the same coronal, then for any arbitrary signed graph $\Gamma$ of order $n$, the signed graphs $\Gamma\circledast\Gamma_1$ and $\Gamma\circledast\Gamma_2$ are also non-co-spectral equienergetic.
    \end{theorem}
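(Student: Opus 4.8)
The plan is to show that $\Gamma\circledast\Gamma_1$ and $\Gamma\circledast\Gamma_2$ have the same $A$-energy by comparing their adjacency characteristic polynomials via Theorem~\ref{Theorem1}, and separately to argue they are not co-spectral. Write $n=|V(\Gamma)|$ and let $\lambda'_{1},\ldots,\lambda'_{n}$ be the eigenvalues of $\Gamma_\mu$. By Theorem~\ref{Theorem1}, for $j=1,2$,
\[
f(A(\Gamma\circledast\Gamma_j),x)=\prod_{i=1}^{m}(x-\lambda_{ji})^{n}\prod_{i=1}^{n}\bigl(x^2-x\,\chi_{A(\Gamma_j)}(x)-{\lambda'_{i}}^2\bigr).
\]
The hypothesis gives $\chi_{A(\Gamma_1)}(x)=\chi_{A(\Gamma_2)}(x)$, so the second product is identical for $j=1$ and $j=2$; only the first product $\prod_{i}(x-\lambda_{ji})^{n}$ differs. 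Hence the multiset of eigenvalues of $\Gamma\circledast\Gamma_j$ is the disjoint union of (a) the spectrum of $\Gamma_j$ each taken with multiplicity $n$, and (b) a common multiset $R$ of $2n$ roots (the roots of the $n$ quadratic-in-$x$-but-rational factors), \emph{which is the same for both $j$}.

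First I would establish the energy equality. The energy of $\Gamma\circledast\Gamma_j$ is the sum of absolute values of all its eigenvalues, which splits as
\[
E_{\Gamma\circledast\Gamma_j}=n\sum_{i=1}^{m}|\lambda_{ji}|+\sum_{\rho\in R}|\rho| = n\,E_{\Gamma_j}+\sum_{\rho\in R}|\rho|.
\]
Since $\Gamma_1$ and $\Gamma_2$ are equienergetic, $E_{\Gamma_1}=E_{\Gamma_2}$, and the second sum is common, so $E_{\Gamma\circledast\Gamma_1}=E_{\Gamma\circledast\Gamma_2}$. One subtlety to address carefully: the factors in (b) are written as $x^2-x\chi_{A(\Gamma_j)}(x)-{\lambda'_i}^2$, which as a rational function has the shared denominator (the characteristic polynomial of $A(\Gamma_j)$, equivalently of $A(\Gamma_1)$ up to the common coronal normalization); I should note that the genuine polynomial factor obtained after clearing denominators, together with the bookkeeping of the coronal's poles cancelling against the $\prod(x-\lambda_{ji})^n$ factor, still yields a root multiset $R$ independent of $j$ because both $\chi_{A(\Gamma_1)}=\chi_{A(\Gamma_2)}$ \emph{and} these two coronals have (by definition, being equal rational functions) the same reduced denominator $F_d(\lambda)$ as in~\eqref{Equ5}. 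This is the step I expect to be the main obstacle: making the cancellation-of-poles argument rigorous so that "the remaining $2n$ roots are the same for $j=1,2$" is actually justified, rather than merely asserting equality of the formal factors.

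Next I would establish non-co-spectrality. Since $\Gamma_1$ and $\Gamma_2$ are not co-spectral, their spectra differ as multisets; taking each eigenvalue with multiplicity $n$ preserves this difference. Because the remaining part $R$ of the spectrum of $\Gamma\circledast\Gamma_j$ is identical for $j=1,2$, the full spectra of $\Gamma\circledast\Gamma_1$ and $\Gamma\circledast\Gamma_2$ differ precisely in the $\Gamma_j$-part, hence $\Gamma\circledast\Gamma_1$ and $\Gamma\circledast\Gamma_2$ are non-co-spectral. Combining the two parts gives the claim. Finally, by Theorem~\ref{TH0} the same conclusion transfers verbatim to the duplication vertex corona $\Gamma\circledcirc\Gamma_j$, since switching-isomorphic signed graphs have the same adjacency spectrum and hence the same energy.
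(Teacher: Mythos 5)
Your strategy is the same as the paper's (apply Theorem~\ref{Theorem1} to both products, use equality of coronals to make the ``quadratic'' factors common, transfer equienergeticity, and argue non-cospectrality from the remaining factor), but the decomposition you assert in (a)/(b) is not correct as stated, and the step you yourself flag as ``the main obstacle'' is exactly the part left unproved. The spectrum of $\Gamma\circledast\Gamma_j$ does \emph{not} in general contain the spectrum of $\Gamma_j$ with multiplicity $n$ plus a common multiset of $2n$ further roots: the factors $x^2-x\chi_{A(\Gamma_j)}(x)-{\lambda'_i}^2$ are rational functions, and the poles of the coronal cancel against part of $\prod_i(x-\lambda_{ji})^n$. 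With the reduced form of the coronal from~(\ref{Equ5}), $\chi_{A(\Gamma_j)}(\lambda)=P_{d-1}(\lambda)/F_d(\lambda)$ and $f(A(\Gamma_j),\lambda)=R^{(j)}_{m-d}(\lambda)F_d(\lambda)$, one gets
\[
f(A(\Gamma\circledast\Gamma_j),\lambda)=R^{(j)}_{m-d}(\lambda)^n\prod_{i=1}^{n}\bigl(\lambda^2F_d(\lambda)-\lambda P_{d-1}(\lambda)-{\lambda'_i}^2F_d(\lambda)\bigr),
\]
so the $j$-dependent part of the spectrum consists only of the roots of $R^{(j)}_{m-d}$, each with multiplicity $n$, while the common part has $n(d+2)$ elements, not $2n$. (For instance, when the second factor is a star $K_{1,n_2}$, its eigenvalues $\pm\sqrt{n_2}$ are poles of the coronal and do not appear in the corona with multiplicity $n$; compare Corollary~\ref{C2}.) Consequently your identity $E_{\Gamma\circledast\Gamma_j}=nE_{\Gamma_j}+\sum_{\rho\in R}|\rho|$ is off by the ($j$-independent) correction $-n\sum_i|\delta_i|$, where $\delta_i$ are the roots of $F_d$.

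The theorem still follows, and the paper closes exactly this gap with the reduced-coronal bookkeeping: equality of coronals forces $P_{d-1}$ and $F_d$ to coincide for $j=1,2$; from $E_{\Gamma_j}=\sum_i|\delta_i|+\sum(\text{absolute roots of }R^{(j)}_{m-d})$ and $E_{\Gamma_1}=E_{\Gamma_2}$ one gets $\sum_i|\alpha_i|=\sum_i|\beta_i|$ for the roots of $R^{(1)}_{m-d}$ and $R^{(2)}_{m-d}$; then $E_{\Gamma\circledast\Gamma_j}=n\sum(\cdot)+\sum_i|\eta_i|$ with the $\eta_i$ (roots of the common degree-$n(d+2)$ polynomial) identical for both, and non-cospectrality follows because $R^{(1)}_{m-d}\neq R^{(2)}_{m-d}$ while the other factor is the same. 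So your idea and route coincide with the paper's; what is missing in your write-up is precisely this cancellation argument, which you correctly identified but did not carry out. Your closing remark that Theorem~\ref{TH0} transfers the conclusion to $\Gamma\circledcirc\Gamma_j$ is consistent with the paper, which makes that observation once for all subsequent results.
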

    \begin{proof}
        Since $\Gamma_1$ and $\Gamma_2$ are having same coronal, the polynomials $P_{d-1}(\lambda)$ and $F_d(\lambda)$ for both $\Gamma_1$ and $\Gamma_2$ given by equation~(\ref{Equ5}) foe Adjacency matrix are equal.\\
        Let \begin{equation}
                f(A(\Gamma_1),\lambda)=R_{m-d}(\lambda)F_d(\lambda)
        \end{equation}
        and\begin{equation}
                f(A(\Gamma_2),\lambda)=R'_{m-d}(\lambda)F_d(\lambda).
        \end{equation}
        Clearly, $R_{m-d}$ and $R'_{m-d}$ are different, as the graphs $\Gamma_1$ and $\Gamma_2$ are non-co-spectral. The characteristics polynomial of $A(\Gamma\circledast\Gamma_1)$ is,
        \begin{equation} \label{Eqn9}
            \begin{split}
                f(A(\Gamma\circledast\Gamma_1),\lambda) &=f(A(\Gamma_1),\lambda)^{n}\prod_{i=1}^{n}\big(\lambda^2-x\chi_{A(\Gamma_1)}(\lambda)-{\lambda'}_{1i}^2\big)\\
                &=(R_{m-d}(\lambda)F_d(\lambda))^n\prod_{i=1}^n\big(\lambda^2-\lambda\frac{P_{d-1}(\lambda)}{F_d(\lambda)}-{\lambda'}^2_{1i}\big)\\
                &=R_{m-d}(\lambda)^n\prod_{i-1}^n\big(\lambda^2F_d(\lambda)-\lambda P_{d-1}(\lambda)-{\lambda'}^2_{1i}F_d(\lambda)\big).
            \end{split}
        \end{equation}
        Similarly the characteristics polynomial of $A(\Gamma\circledast\Gamma_2)$ is,
        \begin{equation} \label{Eqn10}
            f(A(\Gamma\circledast\Gamma_2),\lambda)=R'_{m-d}(\lambda)^n\prod_{i-1}^n\big(\lambda^2F_d(\lambda)-\lambda P_{d-1}(\lambda)-{\lambda'}^2_{1i}F_d(\lambda)\big).
        \end{equation}
        Let the roots of the polynomial $F_d(\lambda)$ be $\delta_1,\delta_2,\ldots,\delta_d$ and the roots of $R_{m-d}(\lambda)$ and $R'_{m-d}(\lambda)$ be $\alpha_1,\alpha_2,\ldots,\alpha_{m-d}$ and $\beta_1,\beta_2,\ldots,\beta_{m-d}$ respectively.\\
        Now since the signed graphs $\Gamma_1$ and $\Gamma_2$ are equienergetic, we have,
        \begin{equation*}
            \sum_{i=1}^d|\delta_i|+\sum_{i=1}^{m-d}|\alpha_i|=E_{\Gamma_1}=E_{\Gamma_2}=\sum_{i=1}^d|\delta_i|+\sum_{i=1}^{m-d}|\beta_i|
        \end{equation*}
        Hence,\begin{equation}\label{Eqn11}
            \sum_{i=1}^{m-d}|\alpha_i|=\sum_{i=1}^{m-d}|\beta_i|.
        \end{equation}
        The product factor in equations (\ref{Eqn9}) and (\ref{Eqn10}) are same and its an $n(d+2)$ degree polynomial. Let its roots be $\eta_1,\eta_2,\ldots,\eta_{n(d+2)}$. From the characteristics polynomial (\ref{Eqn9}), the energy of the signed graph $\Gamma\circledast\Gamma_1$ is, \begin{equation}\label{Eqn12}
                E_{\Gamma\circledast\Gamma_1}=n\sum_{i=1}^{m-d}|\alpha_i|+\sum_{i=1}^{n(d+2)}|\eta_i|
        \end{equation}
        and from the characteristics polynomial (\ref{Eqn10}), the energy of the signed graph $\Gamma\circledast\Gamma_2$ is,
        \begin{equation}\label{Eqn13}
            E_{\Gamma\circledast\Gamma_2}=n\sum_{i=1}^{m-d}|\beta_i|+\sum_{i=1}^{n(d+2)}|\eta_i|.
        \end{equation}
        Then from the above two equations and equation~(\ref{Eqn11}), we have $E_{\Gamma\circledast\Gamma_1}=E_{\Gamma\circledast\Gamma_2}$.\\
        As $\Gamma_1$ and $\Gamma_2$ are non-co-spectral signed graphs, so $\Gamma\circledast\Gamma_1$ and $\Gamma\circledast\Gamma_2$ are non-co-spectral equienergetic.
   \end{proof}
   The above theorem can generate families of non-co-spectral equienergetic signed graphs from a given pair of non-co-spectral equienergetic signed graphs with the same coronals. The following corollary gives the existence of such non-trivial non-co-spectral equienergetic signed graphs with the same coronals.

    \begin{corollary}
        Let $\Gamma_1$ and $\Gamma_2$ be two non-co-spectral equienergetic $r$-regular signed graphs, then for any arbitrary signed graph $\Gamma$, the signed graphs $\Gamma\circledast\Gamma_1$ and $\Gamma\circledast\Gamma_2$ are also non-co-spectral equienergetic.
    \end{corollary}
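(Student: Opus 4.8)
The plan is to obtain this corollary as a direct application of Theorem~\ref{TH3}: the only thing needing verification is that two $r$-regular signed graphs of the same order automatically satisfy the ``same coronal'' hypothesis. First I would recall the coronal identity stated just before Corollary~\ref{C1}, namely that for an $(r,k)$-co-regular signed graph $\Gamma$ on $m$ nodes one has $\chi_{A(\Gamma)}(\lambda)=\frac{m}{\lambda-k}$. Since $\Gamma_1$ and $\Gamma_2$ are $r$-regular signed graphs of the same order $m$, and hence share their (co-)regularity parameters, this forces $\chi_{A(\Gamma_1)}(\lambda)=\chi_{A(\Gamma_2)}(\lambda)$. Having equal order and equal coronal, $\Gamma_1$ and $\Gamma_2$ meet every hypothesis of Theorem~\ref{TH3}, which then yields at once that $\Gamma\circledast\Gamma_1$ and $\Gamma\circledast\Gamma_2$ are non-co-spectral equienergetic for any signed graph $\Gamma$.

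So the deduction itself is essentially a single line; the substantive point of the corollary is that Theorem~\ref{TH3} is not vacuous, i.e.\ that a seed pair of non-co-spectral equienergetic regular signed graphs genuinely exists. For this I would invoke the known families of equienergetic but non-co-spectral regular signed graphs available in the literature, choosing two members of the same order to play the roles of $\Gamma_1$ and $\Gamma_2$; feeding such a seed pair into Theorem~\ref{TH3} and varying $\Gamma$ then produces an infinite supply, exactly as the surrounding discussion anticipates.

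Consequently the main obstacle is not the argument but two bookkeeping matters: being precise that ``$r$-regular'' is understood in the sense that makes the pre-Corollary~\ref{C1} coronal formula apply (so the adjacency coronal depends only on $m$ and $r$, and is therefore common to $\Gamma_1$ and $\Gamma_2$), and certifying that a nontrivial seed pair exists so the statement has content. Neither introduces new ideas: the first is delivered by the coronal result already quoted in this section, and the second by standard examples of equienergetic regular signed graphs.
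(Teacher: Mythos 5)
Your proposal follows the paper's own route: the paper's proof is a two-line reduction to Theorem~\ref{TH3} resting on the assertion that the coronal of any two $r$-regular signed graphs is equal, and you perform the same reduction, only making the coronal step explicit via the formula $\chi_{A(\Gamma)}(\lambda)=\frac{n}{\lambda-k}$ quoted before Corollary~\ref{C1}. One caution: that formula is for $(r,k)$-co-regular signed graphs and depends on the net-degree $k$ (and the order $n$), not on the underlying degree $r$. An $r$-regular signed graph need not be net-regular, and two $r$-regular signed graphs of the same order can have different net-degrees and hence different coronals (for instance, $K_4$ with all edges positive has coronal $\frac{4}{\lambda-3}$, while $K_4$ with one negative perfect matching is net-regular with $k=1$ and has coronal $\frac{4}{\lambda-1}$). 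So your inference ``$r$-regular of the same order $\Rightarrow$ same (co-)regularity parameters $\Rightarrow$ same coronal'' is not justified as written; what is really needed is a common co-regularity pair $(r,k)$, or some extra hypothesis forcing equal net-degree. To be fair, the paper's own proof makes exactly the same leap (``the coronal of any two $r$-regular signed graphs is known to be equal''), so you have not taken a different route or introduced a new error -- you have inherited the paper's looseness, and your explicit appeal to the $(r,k)$ formula actually makes the missing hypothesis visible. Your additional remarks, that $\Gamma_1$ and $\Gamma_2$ must have the same order (which Theorem~\ref{TH3} requires though the corollary omits it) and that nontrivial seed pairs exist, are sensible context but not part of the deduction itself.
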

    \begin{proof}
        Suppose that $\Gamma_1$ and $\Gamma_2$ are non-co-spectral equienergetic $r$-regular signed graphs. The coronal of any two $r$-regular signed graphs is known to be equal. Hence by Theorem~\ref{TH3}, the signed graphs $\Gamma\circledast\Gamma_1$ and $\Gamma\circledast\Gamma_2$ are non-co-spectral energetic signed graphs.
    \end{proof}

\section{Conclusion}
    These results establish a spectral framework for analyzing the interplay between the structural properties of the base signed graphs and their corona products.
    As an application, we showcase the practical utility of the duplication-signed corona product in generating integral and equienergetic signed graphs. These graphs are of great interest in graph theory because of their combinatorial and spectral significance. We give concrete examples to demonstrate the construction and verification of these properties.
    This work expands the study of signed graphs and their spectral properties, providing fresh opportunities to explore concepts like structural balance and graph spectra. It also shows how these ideas can be applied to create specialized graph classes, making meaningful contributions to the field of graph theory.
    
 \section*{Acknowledgment}
    We want to acknowledge the National Institute of Technology Sikkim for giving a doctoral fellowship to Bishal Sonar and Satyam Guragain.
\section*{Funding}
    The authors declare that no funds, grants, or other support were received during the preparation of this manuscript.
\section*{Data Availability}
    Data sharing does not apply to this article as no datasets were generated or analyzed during the current study.
\section*{Deceleration}
\subsection*{Conflict of interest}
    The authors have no relevant financial or non-financial interests to disclose.
\bibliographystyle{abbrv} 
\bibliography{main.bib}
\end{document}